\documentclass{amsart}
\usepackage{amsmath}
\usepackage{amscd}
\usepackage{amssymb}
\usepackage{amsfonts}
\newtheorem{theorem}{Theorem}[section]
\newtheorem{lemma}[theorem]{Lemma}
\newtheorem{corollary}[theorem]{Corollary}

\newtheorem{proposition}[theorem]{Proposition}

\theoremstyle{definition}

\theoremstyle{remark}
\newtheorem{remark}[theorem]{Remark}
\numberwithin{equation}{section}
\newcommand{\abs}[1]{\lvert#1\rvert}

\begin{document}
\title[Some extensions of
the mean curvature flow in Riemannian manifolds] {Some extensions of
the mean curvature flow\\ in Riemannian manifolds}

\author{Jia-Yong Wu}

\address{Department of Mathematics, East China Normal
University, Dong Chuan Road 500, Shanghai 200241, P. R. China}

\address{\emph{Present Address}: Department of Mathematics, Shanghai Maritime University,
Shanghai 200135, P. R. China}

\email{jywu81@yahoo.com}

\thanks{This work is partially supported by the NSFC10871069.}

\subjclass[2000]{Primary 53C40; Secondly 57R42.}

\dedicatory{}

\keywords{Mean curvature flow; Riemannian submanifold; Integral
curvature; Maximal existence time.}

\begin{abstract}
Given a family of smooth immersions $F_t: M^n\to N^{n+1}$ of closed
hypersurfaces in a locally symmetric Riemannian manifold $N^{n+1}$
with bounded geometry, moving by the mean curvature flow, we show
that at the first finite singular time of the mean curvature flow,
certain subcritical quantities concerning the second fundamental
form blow up. This result not only generalizes a recent result of
Le-Sesum and Xu-Ye-Zhao, but also extends the latest work of N. Le
in the Euclidean case (arXiv: math.DG/1002.4669v2).
\end{abstract}
\maketitle

\section{Introduction}\label{sec1}
We study the blow-up phenomena of the geometric quantities for
compact hypersurfaces $M^n$, $n\geq 2$, without boundary, which are
smoothly immersed in a Riemannian manifold $N^{n+1}$. Let $M^n=M_0$
be given by some diffeomorphism
\[
F_0: U\subset \mathbb{R}^n\rightarrow F_0(U)\subset M_0\subset
N^{n+1}.
\]
Consider a smooth one-parameter family of immersions
\[
 F(\cdot, t): M^{n}\rightarrow N^{n+1}
\]
satisfying the evolution equation
\begin{equation}\label{MCF1}
\frac{\partial}{\partial t}F(x,t)=-H(x,t)\nu(x,t),\quad \forall (x,
t)\in M\times [0,T)
\end{equation}
with the initial condition
\[
F(\cdot, 0)=F_{0}(\cdot).
\]
Here $H(x,t)$ and $\nu(x, t)$ denote the mean curvature and the unit
outward normal vector of the hypersurface $M_t=F(M^n,t)$ at the
point $F(x,t)$. Equation \eqref{MCF1} is often called the mean
curvature flow (MCF for short). We easily see that \eqref{MCF1} is a
quasilinear parabolic equation with a smooth solution at least on
some short time interval.

When the ambient space is the Euclidean space $\mathbb{R}^{n+1}$, G.
Huisken \cite{Huisken84} showed that the norm of the second
fundamental form $A(\cdot,t)$ of an evolving hypersurface under the
mean curvature flow must blow up at a finite singular time.

\begin{theorem}[G. Huisken \cite{Huisken84}]\label{HuiAound}
Suppose $T<\infty$ is the first singularity time for a compact mean
curvature flow \eqref{MCF1} with the ambient space
$N^{n+1}=\mathbb{R}^{n+1}$. Then $\sup_{M_t}|A|(\cdot,t)\to \infty$
as $t\to T$.
\end{theorem}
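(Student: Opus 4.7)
The plan is to argue by contradiction: suppose $\sup_{M_t}|A|(\cdot,t)\le C$ uniformly for $t\in[0,T)$, and deduce that the flow extends smoothly past $T$, which contradicts $T$ being the first singular time. The argument proceeds in three stages: control of the metric, control of all derivatives of $A$, and passage to a smooth limit hypersurface $M_T$ that can serve as initial data for a short-time restart.

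First I would bound the metric. From the standard evolution formula $\partial_t g_{ij}=-2Hh_{ij}$ together with $|H|\le\sqrt{n}\,|A|$, one gets $|\partial_t g_{ij}|\le 2\sqrt{n}\,C\,|h_{ij}|\le C'\,g_{ij}$ in the sense of quadratic forms. Integrating in $t$ yields $e^{-C'T}g(0)\le g(t)\le e^{C'T}g(0)$, so $g(t)$ remains uniformly equivalent to $g(0)$ on $[0,T)$. Combined with $\partial_t F=-H\nu$ and $|H|\le\sqrt{n}\,C$, this gives a uniform Lipschitz bound on $F(\cdot,t)$ in $t$, hence $F(\cdot,t)$ converges uniformly in $C^0$ to a map $F(\cdot,T)$ as $t\to T$.

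Next I would upgrade this to $C^\infty$ convergence by showing that $\sup_{M_t}|\nabla^m A|$ is bounded for every $m\ge 0$. The tool is the Simons-type evolution equation in $\mathbb{R}^{n+1}$,
\begin{equation*}
\frac{\partial}{\partial t}|\nabla^m A|^2=\Delta|\nabla^m A|^2-2|\nabla^{m+1}A|^2+\sum_{i+j+k=m}\nabla^i A\ast\nabla^j A\ast\nabla^k A\ast\nabla^m A,
\end{equation*}
where $\ast$ denotes a contraction with the metric. Proceeding by induction on $m$, the maximum principle applied to a suitable linear combination $|\nabla^m A|^2+K|\nabla^{m-1}A|^2$ (with $K$ chosen large enough to absorb the bad gradient term using the $-2|\nabla^m A|^2$ good term from the previous level) gives a bound $\sup_{M_t}|\nabla^m A|\le C_m$ on $[0,T)$. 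This is the classical Bernstein-type iteration used by Huisken.

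With all covariant derivatives of $A$ bounded and the metric uniformly equivalent to $g(0)$, standard interpolation on a fixed background manifold yields uniform $C^k$ bounds on $F(\cdot,t)$ for every $k$. Ascoli–Arzel\`a then produces a smooth limiting immersion $F(\cdot,T):M^n\to\mathbb{R}^{n+1}$ with $F(\cdot,t)\to F(\cdot,T)$ in $C^\infty$ as $t\to T$. Finally, applying the short-time existence theorem for the mean curvature flow with initial datum $M_T=F(M^n,T)$ produces a smooth extension of the flow on some interval $[T,T+\varepsilon)$, contradicting the assumption that $T$ is the first singular time. I expect the main obstacle to be the higher-derivative step: one has to carefully arrange the induction so that the $-2|\nabla^{m+1}A|^2$ dissipation at each level absorbs the cubic error terms from the evolution of the previous level, which requires being a bit careful with the constants $K$ but is otherwise routine once the structure of the Simons-type formula is written out.
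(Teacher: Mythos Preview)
Your proposal is the standard contradiction argument and is essentially correct; it reproduces Huisken's original proof from \cite{Huisken84}. However, the present paper does not give its own proof of this statement: Theorem~\ref{HuiAound} is quoted as background and attributed to Huisken, with no argument supplied here. So there is nothing in this paper to compare your proof against.

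That said, a couple of minor points of care in your write-up. The inequality $|\partial_t g_{ij}|\le 2\sqrt{n}\,C\,|h_{ij}|\le C'g_{ij}$ is slightly informal: what you really use is that the eigenvalues of $h_{ij}$ with respect to $g_{ij}$ lie in $[-C,C]$, so $-2\sqrt{n}\,C^2 g_{ij}\le \partial_t g_{ij}\le 2\sqrt{n}\,C^2 g_{ij}$ as quadratic forms, which indeed integrates to the uniform equivalence you want. Also, in the Bernstein iteration step, the trick of adding $K|\nabla^{m-1}A|^2$ works, but you should be explicit that the good term $-2|\nabla^m A|^2$ comes from the evolution of $|\nabla^{m-1}A|^2$, not from that of $|\nabla^m A|^2$; the latter contributes $-2|\nabla^{m+1}A|^2$, which is simply dropped. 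With those clarifications the argument is complete and is exactly the route taken in \cite{Huisken84}.
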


Later, G. Huisken and C. Sinestrari \cite{HS} studied the blow up of
$H$ near a singularity for mean convex hypersurfaces. They also
established lower bounds on the principal curvatures in this
mean-convex setting.

After that, N. Le and N. Sesum \cite{Le-Sesum}, and H.-W. Xu, F. Ye
and E.-T. Zhao \cite{XYZ1} used different methods to extend the
previous results. They independently showed that if the second
fundamental form is uniformly bounded from below and the mean
curvature is bounded in certain integral sense, then the mean
curvature flow can be extended smoothly past time $T$. Their proofs
are both based on blow-up arguments, and the Moser iteration using
the Michael-Simon inequality \cite{[Michael-Simon]}.

Meanwhile N. Le and N. Sesum \cite{Le-Sesum,Le-Sesum2}, and H.-W.
Xu, F. Ye and E.-T. Zhao \cite{XYZ1} gave another global conditions
for extending a smooth solution to the mean curvature flow.
Specifically, they proved that

\begin{theorem}\label{dglobscal}
Suppose $T<\infty$ is the first singularity time for a mean
curvature flow \eqref{MCF1} of compact hypersurfaces of
$N^{n+1}=\mathbb{R}^{n+1}$. Let $p$ and $q$ be positive numbers
satisfying $\frac np+\frac 2q=1$. If
\[
||A||_{L^{p, q}(M\times [0,T))}:
=\left[\int^T_0\left(\int_{M_t}|A|^p\right)^{q/p}dt\right]^{1/q}<
\infty,
\]
then this flow can be extended past time $T$. In particular, let
$p=q=n+2$, and if
\[
\int_0^{T}\int_{M_t}|A|^{n+2}d\mu dt<\infty,
\]
then this flow can be extended past time $T$.
\end{theorem}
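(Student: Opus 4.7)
The plan is to argue by contradiction. Assume $T<\infty$ is the first singular time and that $\|A\|_{L^{p,q}(M\times[0,T))}<\infty$; by Theorem \ref{HuiAound} it suffices to derive an a priori $L^{\infty}$ bound on $|A|$ on $[0,T)$, since the standard short-time existence theory then extends the flow past $T$. I will produce such a bound by a Moser iteration on the $L^{2s}$ energies of $|A|$ as $s\to\infty$, driven by the scaling-critical relation $\frac{n}{p}+\frac{2}{q}=1$.

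First I would record Huisken's evolution equation in Euclidean space,
\[
\frac{\partial}{\partial t}|A|^2 = \Delta|A|^2 - 2|\nabla A|^2 + 2|A|^4,
\]
and, using Kato's inequality, deduce for each $s\geq 1$ the differential inequality
\[
\frac{\partial}{\partial t}|A|^{2s} \leq \Delta|A|^{2s} - c_s|\nabla|A|^s|^2 + 2s|A|^{2s+2}.
\]
Integrating over $M_t$ with $\frac{d}{dt}d\mu_t=-H^2\,d\mu_t$ and $\int_{M_t}\Delta f\,d\mu_t=0$ then yields the energy inequality
\[
\frac{d}{dt}\int_{M_t}|A|^{2s}\,d\mu + c_s\int_{M_t}|\nabla|A|^s|^2\,d\mu \leq C_s\int_{M_t}|A|^{2s+2}\,d\mu.
\]

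Next I would apply the Michael-Simon Sobolev inequality on $M_t\hookrightarrow\mathbb{R}^{n+1}$ to $f=|A|^s$, obtaining
\[
\Bigl(\int_{M_t}|A|^{\frac{2sn}{n-2}}\,d\mu\Bigr)^{\frac{n-2}{n}} \leq C\int_{M_t}\bigl(|\nabla|A|^s|^2 + |H|^2|A|^{2s}\bigr)\,d\mu.
\]
The non-linear term $\int|A|^{2s+2}$ is split via Hölder's inequality into a factor $\bigl(\int_{M_t}|A|^p\bigr)^{2/p}$, whose $L^{q/2}$ norm in time equals $\|A\|_{L^{p,q}}^2$ and is finite by hypothesis, and a residual factor that is subcritical with respect to the Sobolev inequality, so that Young's inequality absorbs it into the parabolic energy. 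This absorption is carried out on a sub-interval $[t_0,T)$ chosen so that $\|A\|_{L^{p,q}(M\times[t_0,T))}$ is smaller than a fixed structural constant; such $t_0$ exists by absolute continuity of the integral. The outcome is a recursive estimate controlling $\sup_{t\in[t_0,T)}\int_{M_t}|A|^{2s\kappa}$ in terms of $\sup_{t\in[t_0,T)}\int_{M_t}|A|^{2s}$ for some fixed $\kappa>1$ produced by parabolic Sobolev interpolation.

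Iterating with exponents $s_k = s_0\kappa^k$ and checking that the constants $C(s_k)^{1/\kappa^k}$ remain summable in $k$, the Moser scheme delivers $\sup_{[t_0,T)\times M}|A|\leq C$, contradicting Theorem \ref{HuiAound}; the special case $p=q=n+2$ is immediate since $\frac{n}{n+2}+\frac{2}{n+2}=1$. The principal obstacle I anticipate is the Hölder balance in the preceding step: the exponents must be chosen so that the portion of $\int|A|^{2s+2}$ absorbed by the hypothesis matches exactly the scaling permitted by $\frac{n}{p}+\frac{2}{q}=1$, with the residual factor fitting inside the parabolic energy. A secondary technicality is that the Michael-Simon constant effectively involves $|H|$, requiring a separate uniform bound on $H$ in time, itself derivable from the hypothesis by a further Hölder argument.
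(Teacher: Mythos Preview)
The paper does not itself prove Theorem~\ref{dglobscal}; it is quoted from prior work of Le--Sesum and Xu--Ye--Zhao, and the paper explicitly remarks that their proof ``is still based on a blow-up argument, using the compactness theorem for the mean curvature flow.'' That is: one rescales parabolically at a putative singular point, extracts a limit flow via a compactness theorem, and derives a contradiction from the scale-invariant finiteness of $\|A\|_{L^{p,q}}$. Your proposal takes a genuinely different route---a direct global Moser iteration on the energies $\int_{M_t}|A|^{2s}$, using Michael--Simon and the critical H\"older balance $\tfrac{n}{p}+\tfrac{2}{q}=1$ to absorb the nonlinear term after restricting to $[t_0,T)$ with small residual $L^{p,q}$ norm. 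This is a sound strategy and is in fact closer in spirit to how some of the cited authors handle related statements; it avoids compactness machinery at the cost of a careful bookkeeping of exponents. The paper's own new results (Theorem~\ref{wuN2} and Corollary~\ref{wuN1}) recover only the special case $p=q=n+2$, and do so by yet a third path: a \emph{localized} Moser iteration with space-time cutoffs yielding a small-energy regularity statement (Proposition~\ref{smallcont}), then the linear bound $\sup|A|\leq c\bigl(1+\int|A|^{n+3}\bigr)$ (Proposition~\ref{smallcont2}), and finally an Osgood/ODE argument; the plain $L^{n+2}$ case follows from the logarithmic one by the trivial estimate $|A|^{n+2}/\log(100+|A|)\leq\tfrac12|A|^{n+2}$.

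One correction to your closing worry: you do not need ``a separate uniform bound on $H$ in time.'' The Michael--Simon inequality applied to $f=|A|^s$ produces the term $\int_{M_t}|H|^2|A|^{2s}\,d\mu$ on the right, and since $|H|^2\leq n|A|^2$ this is bounded by $n\int_{M_t}|A|^{2s+2}\,d\mu$, exactly the nonlinearity you are already absorbing via H\"older and the smallness of $\|A\|_{L^{p,q}(M\times[t_0,T))}$. So that term folds into the main step rather than requiring an auxiliary argument.
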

Note that this result is still based on a blow-up argument, using
the compactness theorem for the  mean curvature flow ~\cite{ChenHe}.
Recently, N. Le \cite{Le} gave a logarithmic improvement of the
above results by proving that a family of subcritical quantities
concerning the second fundamental form  blows up at the first finite
singular time of the mean curvature flow.

\begin{theorem}[N. Le \cite{Le}]\label{gbsclog}
Suppose $T<\infty$ is the first singularity time for a mean
curvature flow \eqref{MCF1} of compact hypersurfaces of
$N^{n+1}=\mathbb{R}^{n+1}$. If
\[
\int_0^T\int_{M_t}\frac{|A|^{n+2}}{\log(1+|A|)}d\mu dt<\infty,
\]
then this flow can be extended past time $T$.
\end{theorem}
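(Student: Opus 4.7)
The plan is to argue by contradiction via a parabolic blow-up combined with the mean curvature flow compactness theorem, refining the scheme that proves Theorem \ref{dglobscal}.

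Suppose the flow cannot be extended past $T$; by Theorem \ref{HuiAound} we have $\sup_{M_t}|A|\to\infty$. Pick $(p_k,t_k)\in M\times[0,T)$ with $t_k\to T$ and
\[
\lambda_k:=|A|(p_k,t_k)=\max_{M\times[0,t_k]}|A|\longrightarrow\infty,
\]
and parabolically rescale $\tilde F_k(x,s):=\lambda_k\bigl(F(x,t_k+\lambda_k^{-2}s)-F(p_k,t_k)\bigr)$. Then $|\tilde A_k|\leq 1$ on $[-\lambda_k^{2}t_k,0]$ and $|\tilde A_k|(p_k,0)=1$. By the Chen--He \cite{ChenHe} compactness theorem a subsequence converges smoothly on compact subsets to an ancient mean curvature flow $\tilde F_\infty\colon M_\infty^n\times(-\infty,0]\to\mathbb{R}^{n+1}$ satisfying $|\tilde A_\infty|\leq 1$ with $|\tilde A_\infty|(p_\infty,0)=1$; in particular $\tilde F_\infty$ is non-flat.

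Next, I would feed the hypothesis into the limit. Parabolic scale-invariance of the space-time measure gives, for any fixed $R>0$,
\[
\int_{-R^{2}}^{0}\int_{\tilde M_s^{k}}\frac{|\tilde A_k|^{n+2}}{\log(1+\lambda_k|\tilde A_k|)}\,d\tilde\mu_s\,ds=\int_{t_k-R^{2}/\lambda_k^{2}}^{t_k}\int_{M_t}\frac{|A|^{n+2}}{\log(1+|A|)}\,d\mu_t\,dt,
\]
which is the tail of a convergent integral and therefore tends to $0$ as $k\to\infty$. Smooth convergence provides a fixed parabolic neighborhood $U$ of $(p_\infty,0)$ on which $|\tilde A_k|\geq 1/2$ for $k$ large.

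The subtle point, and the main obstacle, is the logarithmic weight. Using $|\tilde A_k|\leq 1$ and $\log(1+\lambda_k|\tilde A_k|)\leq\log(1+\lambda_k)$ one only gets
\[
\int\!\!\int_{U}|\tilde A_k|^{n+2}\,d\tilde\mu_s\,ds\leq\log(1+\lambda_k)\int\!\!\int_{U}\frac{|\tilde A_k|^{n+2}}{\log(1+\lambda_k|\tilde A_k|)}\,d\tilde\mu_s\,ds,
\]
so the direct analogue of the argument behind Theorem \ref{dglobscal} (which would conclude $\tilde A_\infty\equiv 0$ and so contradict $|\tilde A_\infty|(p_\infty,0)=1$) requires $\log(1+\lambda_k)\cdot(\text{tail})\to 0$, a decay not guaranteed by the hypothesis. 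To close the gap I would supplement the blow-up with a log-weighted Moser iteration in the spirit of \cite{Le-Sesum,XYZ1}: starting from the Huisken-type evolution $(\partial_t-\Delta)|A|^{2}\leq-2|\nabla A|^{2}+2|A|^{4}$, derive a differential inequality for $\int_{M_t}|A|^{2p}\log^{-\alpha}(1+|A|^{2})\,d\mu_t$ and iterate using the Michael--Simon Sobolev inequality \cite{[Michael-Simon]}, with $(p,\alpha)$ chosen so the logarithmic weight is absorbed into the Sobolev step. This would yield an $L^{\infty}$ bound on $|A|$ in terms of the assumed subcritical integral, contradicting $\sup|A|\to\infty$; deriving the log-weighted differential inequality that still admits iteration is the main technical step, and it is precisely there that the logarithmic improvement over Theorem \ref{dglobscal} arises.
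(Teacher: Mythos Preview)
You correctly diagnose why the naive blow-up argument behind Theorem \ref{dglobscal} fails here: after rescaling, the logarithmic denominator produces a factor $\log(1+\lambda_k)$ that the tail of the integral need not absorb. But your proposed remedy --- a ``log-weighted Moser iteration'' in which the logarithm is absorbed into the Sobolev step --- is not how the result is actually obtained, and as stated it is too vague to constitute a proof. There is no iteration of quantities like $\int|A|^{2p}\log^{-\alpha}(1+|A|^2)\,d\mu$ in Le's argument (and the paper's generalization of it); the Moser iteration used is completely standard, with no logarithmic weights at all.

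The mechanism you are missing is an Osgood-type ODE comparison, and it is the genuine source of the logarithmic gain. The scheme is: (i) a \emph{standard} parabolic Moser iteration (Lemmas \ref{sorevho}--\ref{moser}, Proposition \ref{smallcont}) yields, after a rescaling argument, the pointwise-to-integral estimate of Proposition \ref{smallcont2},
\[
f(t):=\sup_{M_t}|A|\le c\Bigl(1+\int_0^t\!\!\int_{M_s}|A|^{n+3}\,d\mu\,ds\Bigr),
\]
with exponent one \emph{above} critical; (ii) one then writes $|A|^{n+3}=\Psi(|A|)\cdot\frac{|A|^{n+2}}{\log(1+|A|)}$ with $\Psi(s)=s\log(1+s)$ and bounds $\Psi(|A|)\le\Psi(f(s))$, obtaining
\[
f(t)\le c\Bigl(1+\int_0^t\Psi(f(s))\,G(s)\,ds\Bigr),\qquad G(s)=\int_{M_s}\frac{|A|^{n+2}}{\log(1+|A|)}\,d\mu;
\]
(iii) setting $h(t)$ equal to the right-hand side gives $h'(t)\le c\,\Psi(h(t))\,G(t)$, and since $\int^\infty ds/\Psi(s)=\int^\infty ds/(s\log(1+s))=\infty$ while $\int_0^T G(s)\,ds<\infty$ by hypothesis, Osgood's lemma forces $h$ (hence $f$) to stay bounded on $[0,T)$, contradicting Theorem \ref{HuiAound}. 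The logarithm is never ``absorbed into the Sobolev step''; it enters only through the divergence of $\int^\infty ds/(s\log s)$ in the ODE comparison.
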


If the ambient space $N$ is a general Riemannian manifold, G.
Huisken \cite{Huisken86} gave an sufficient condition to assure the
extension over time for mean curvature flow \eqref{MCF1} provided
the ambient space $N$ admits bounded geometry. Here we say that a
Riemannian manifold is called bounded geometry if its sectional
curvature and the first covariant derivative of the curvature tensor
are bounded, and if the injective radius is bounded from below by a
positive constant. In \cite{Cooper}, A.A. Cooper mainly investigated
the behaviour at finite-time singularities of the mean curvature
flow of compact Riemannian submanifolds $M^n\to N^{n+\alpha}$
($\alpha\geq 1$) and generalized Huisken's results. He observed that
in fact it suffices to consider the tensor $\bar{A}_{ij}=H^\alpha
h_{ij\alpha}$, where $H=tr A$ is the mean curvature and $h$ are the
components of the second fundamental form. Namely, he proved that

\begin{theorem}[A.A. Cooper \cite{Cooper}]\label{Coopound}
Let $(N,h)$ be a Riemannian manifold with bounded geometry. Suppose
$T<\infty$ is the first singular time for a mean curvature flow
\eqref{MCF1} of compact submanifolds of $(N,h)$. Then
$\sup_{M_t}|\bar{A}|(\cdot,t) \to \infty$ as $t\to T$.
\end{theorem}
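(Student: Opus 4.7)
The plan is to argue by contradiction: assume that $|\bar{A}|(\cdot,t)$ is uniformly bounded by some constant $C$ on $M_t\times [0,T)$ and show that this forces a smooth extension of the flow past $T$, contradicting the maximality of $T$. Two preliminary bounds follow at once from the assumption. Under the mean curvature flow the induced metric satisfies $\partial_t g_{ij} = -2 H^\alpha h_{ij\alpha} = -2 \bar{A}_{ij}$, so the metrics $g(t)$ are uniformly equivalent on $[0,T)$ with controlled distortion. Moreover, tracing gives $\mathrm{tr}_g \bar{A} = H^\alpha H_\alpha = |H|^2$, so by Cauchy--Schwarz $|H|^4 \leq n|\bar{A}|^2 \leq nC^2$, bounding the mean curvature vector in norm.

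The crux of the argument is to upgrade the bound on $|\bar{A}|$ to a bound on the full second fundamental form $A$. Using a Simons-type identity together with the evolution of $h$ in the submanifold setting, one derives an equation of the form
\[
\frac{\partial}{\partial t}|A|^2 - \Delta |A|^2 = -2|\nabla A|^2 + \mathcal{Q},
\]
where the reaction term $\mathcal{Q}$ is a polynomial in $A$, $\bar{A}$, $H$, $\mathrm{Rm}^N$ and $\nabla \mathrm{Rm}^N$. The bounded geometry hypothesis on $N$ controls the ambient curvature contributions, while the dangerous terms that would otherwise be quartic in $A$ should be rewritten so that $\bar{A}$ (bounded by assumption) carries one factor. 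A maximum principle argument, applied either to $|A|^2$ directly with a Gronwall estimate or to a suitable combination $|A|^2 + \lambda |\bar{A}|^2$, then yields a uniform bound $|A|(\cdot,t) \leq C'$ on $[0,T)$.

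Once $|A|$ is uniformly bounded, Shi-type interpolation estimates adapted to the ambient Riemannian setting (using the bounded geometry of $N$) provide uniform bounds on every covariant derivative $|\nabla^m A|$, $m \geq 1$. Combined with the metric equivalence, this shows that the embeddings $F(\cdot,t)$ converge in $C^\infty$ to a smooth limit embedding $F(\cdot,T)$, and restarting the mean curvature flow from this limit produces a smooth extension past $T$, contradicting the assumption that $T$ is the first singular time.

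The principal obstacle is the passage from a bound on $|\bar{A}|$ to one on $|A|$: the contraction $\bar{A}_{ij} = H^\alpha h_{ij\alpha}$ only controls the component of $h$ parallel in the normal bundle to the mean curvature vector, so the components of $h$ transverse to $H^\alpha$ are a priori unconstrained. Identifying the precise algebraic structure of the reaction term $\mathcal{Q}$ that allows these transverse components to be absorbed into factors bounded by $|\bar{A}|$, $|H|$ and the ambient geometry is where the essential new ingredient beyond Huisken's codimension-one argument enters.
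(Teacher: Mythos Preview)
The paper does not prove this theorem; it is quoted from Cooper's preprint and used as a black box (see the end of Section~\ref{sec7}, where the final step of the proof of Theorem~\ref{wuN2} simply invokes Theorem~\ref{Coopound}). So there is no ``paper's own proof'' to compare against. The paper does, however, describe Cooper's method in the Introduction: ``Cooper's proof relies strongly on a blow-up argument combined with a compactness property of the mean curvature flow.'' That is a different route from the direct maximum-principle strategy you outline. In Cooper's argument one assumes $|\bar A|$ stays bounded while $|A|$ blows up, performs a parabolic rescaling so that $|A|$ is normalized, and passes to a limit flow; under the rescaling the bounded quantity $|\bar A|$ tends to zero, forcing the limit to be a nontrivial eternal minimal flow in Euclidean space, which one then rules out.

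Your own proposal has a genuine gap exactly where you flag it. The reaction term in the evolution of $|A|^2$ in higher codimension contains quartic contractions of the type $h_{ij}^{\alpha}h_{jk}^{\alpha}h_{kl}^{\beta}h_{li}^{\beta}$ and $|\langle A^{\alpha},A^{\beta}\rangle|^2$ in which no factor of $H^{\alpha}$ appears, so there is no evident algebraic mechanism to convert one of the four $h$'s into a $\bar A$. Put differently, $\bar A$ controls only the component of $A$ along the mean-curvature direction in the normal bundle, and the quartic terms see the transverse components quadratically; a Gronwall or maximum-principle bound on $|A|^2$ from a bound on $|\bar A|$ therefore cannot close without an additional idea. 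This is precisely why Cooper's proof bypasses the issue via rescaling and compactness rather than by a pointwise differential inequality. If you want to pursue a direct argument, you would need either a new algebraic identity relating the quartic reaction to $\bar A$ (none is known in general codimension) or to work instead with the evolution of a quantity like $|\bar A|$ itself and show that its boundedness already suffices for extension without ever controlling the full $|A|$, but that second option also requires the blow-up/compactness machinery to conclude.
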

In other words, from Theorem~\ref{Coopound}, we can see that the
geometric quantity $\bar{A}(\cdot,t)$ uniformly bounded is enough to
extend a mean curvature flow of compact Riemannian submanifolds
$M^n\to N^{n+\alpha}$, provided $N$ is a bounded geometry. Cooper's
proof relies strongly on a blow-up argument combined with a
compactness property of the mean curvature flow.

In the case of $\alpha=1$, Cooper's result has been improved in
\cite{XYZ2}, assuming certain integral bounds on the mean curvature
and the second fundamental form is bounded from below.
\begin{theorem}[H.-W. Xu, F. Ye and E.-T. Zhao \cite{XYZ2}]\label{globsc2}
Let $F_t:M^n\rightarrow N^{n+1}$ $(n\geq 3)$ be a solution to the
compact mean curvature flow \eqref{MCF1} of on a finite time
interval $[0,T)$. If there is a positive constant $C$ such that
\[
h_{ij}\geq-C
\]
for $(x,t)\in M\times [0,T)$. Further we assume that
\[||H||_{L^\alpha(M\times [0,T))}:=\left(\int^T_0\int_{M_t}|H|^\alpha d\mu
dt\right)^{\frac{1}{\alpha}}<\infty
\] for some $\alpha\geq n+2$.
Then this flow can be extended past time $T$.
\end{theorem}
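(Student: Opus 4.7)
The plan is to reduce the theorem to Cooper's extension criterion (Theorem~\ref{Coopound}) by showing that $|H|$ remains uniformly bounded on $M\times[0,T)$. The reduction is elementary: writing the principal curvatures as $\kappa_1,\dots,\kappa_n$, the hypothesis $h_{ij}\geq -C$ gives $\kappa_i+C\geq 0$ with $\sum_j(\kappa_j+C)=H+nC\geq 0$, so $|\kappa_i|\leq (n+1)C+|H|$ for each $i$ and hence $|A|\leq C_1(1+|H|)$. In codimension one, $\bar A_{ij}=H\,h_{ij}$, so $|\bar A|=|H|\,|A|\leq C_1|H|(1+|H|)$; thus a uniform bound on $|H|$ delivers a uniform bound on $|\bar A|$, and Theorem~\ref{Coopound} then extends the flow past $T$.

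To bound $|H|$ I would argue by contradiction, following the blow-up strategy of \cite{Le-Sesum,XYZ1}. Assume $\sup_{M\times[0,T)}|H|=\infty$ and choose $t_k\nearrow T$ with $\lambda_k:=\sup_{M\times[0,t_k]}|H|\to\infty$, attained at some $x_k\in M$. Using geodesic normal coordinates of $N^{n+1}$ centred at $F(x_k,t_k)$ and dilating the chart by $\lambda_k$, I obtain a sequence of mean curvature flows $\tilde F_k$ in rescaled ambient metrics $\tilde g_k$ on $\mathbb{R}^{n+1}$ with $|\tilde H_k|(0,0)=1$, $|\tilde H_k|\leq 1$ and $|\tilde A_k|\leq 1+nC/\lambda_k$ on parabolic neighbourhoods of $(0,0)$. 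The bounded geometry of $N$ guarantees $\tilde g_k\to g_{\mathrm{Eucl}}$ smoothly on compact sets, and the uniform bound $|\tilde{\bar A}_k|=|\tilde H_k|\,|\tilde A_k|\leq C$ then lets me invoke the compactness property underlying Theorem~\ref{Coopound} to extract a subsequential smooth limit flow $\tilde F_\infty$ in Euclidean $\mathbb{R}^{n+1}$ with $|\tilde H_\infty|(0,0)=1$.

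The final step exploits the scaling behaviour of the $L^{n+2}$-norm of $H$. Since $|H|^{n+2}\,d\mu\,dt$ is invariant under parabolic rescaling, the integral of $|\tilde H_k|^{n+2}$ over a fixed parabolic ball on the rescaled flow equals the integral of $|H|^{n+2}$ over the shrinking set $B_{R/\lambda_k}(x_k)\times(t_k-R^2/\lambda_k^{2},t_k)$ in the original flow. For $\alpha=n+2$ this tends to zero because $|H|^{n+2}\in L^1(M\times[0,T))$; for $\alpha>n+2$, H\"older's inequality together with the bounded $L^\alpha$ norm and the vanishing space-time measure of the rescaled region forces the same conclusion. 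Passing to the limit yields $\tilde H_\infty\equiv 0$ on every parabolic ball, contradicting $|\tilde H_\infty|(0,0)=1$.

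The main technical obstacle is the compactness step in the curved ambient: the rescaled flows live in a sequence of only asymptotically Euclidean ambient manifolds rather than one fixed ambient, so the compactness argument must be combined with the smooth convergence $\tilde g_k\to g_{\mathrm{Eucl}}$ afforded by the bounded geometry of $N$. A subsidiary difficulty is to perform the point-picking carefully enough that the rescaled curvature bounds hold uniformly on a fixed (not shrinking) parabolic neighbourhood of the origin, which is needed to take the limit.
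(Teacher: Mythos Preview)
This theorem is not proved in the present paper; it is quoted from \cite{XYZ2} as background (Theorem~\ref{globsc2} in the introduction), and no proof of it appears anywhere in the body. There is therefore nothing in the paper to compare your argument against directly.

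That said, your blow-up strategy is a reasonable route to the result and is in the spirit of the arguments referenced in \cite{Le-Sesum,XYZ1,XYZ2,Cooper}. The reduction $|A|\leq C_1(1+|H|)$ from the pinching $h_{ij}\geq -C$ is correct, and the scale-invariance of $\int|H|^{n+2}\,d\mu\,dt$ is exactly the mechanism that forces $\tilde H_\infty\equiv 0$ on the limit. Two points deserve care. First, the statement as quoted here lists no hypothesis on $N$, but your argument (via Theorem~\ref{Coopound} and the convergence $\tilde g_k\to g_{\mathrm{Eucl}}$) requires $N$ to have bounded geometry; make that assumption explicit. Second, the compactness step you flag is the real burden: you do have uniform bounds on $|\tilde A_k|$ (not merely on $|\tilde H_k|$), which is what standard MCF compactness needs, but you still have to supply local area and injectivity-radius control on the rescaled slices to extract a nontrivial limit hypersurface. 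For $\alpha>n+2$ you may simply reduce to $\alpha=n+2$ by H\"older, since the total space-time volume $\int_0^T\mathrm{Vol}(M_t)\,dt$ is finite (volume is non-increasing under \eqref{MCF1}); a separate argument is unnecessary.
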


In this paper, we want to extend Theorems \ref{dglobscal} and
\ref{gbsclog} to the case when the ambient space is a general
Riemannian manifold. Similar to the logarithmic improvement result
proved by N. Le \cite{Le}, we obtain a family of subcritical
quantities involving the second fundamental form blows up at the
first finite singular time of the mean curvature flow. In
particular,
\begin{theorem}\label{wuN2}
Suppose $T<\infty$ is the first singularity time for a mean
curvature flow \eqref{MCF1} of compact hypersurfaces of a locally
symmetric Riemannian manifold $N^{n+1}$ with bounded geometry. If
\[
\int_0^T\int_{M_t}\frac{|A|^{n+2}}{\log(a+|A|)}d\mu dt<\infty,
\]
where $a$ is a fixed constant and $a\geq 1$, then this flow can be
extended past time $T$.
\end{theorem}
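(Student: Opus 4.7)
My plan is to argue by contradiction and adapt the scheme of N.~Le \cite{Le} from the Euclidean setting to the locally symmetric ambient setting, with the local symmetry of $N^{n+1}$ playing the role that flatness of $\mathbb{R}^{n+1}$ plays in the original proof. Assume the flow cannot be smoothly continued past $T$. By the hypersurface specialization of Theorem~\ref{Coopound}---in which $\bar A_{ij}=Hh_{ij}$ and $|H|\le\sqrt n\,|A|$, so a uniform bound on $|A|$ forces a uniform bound on $|\bar A|$---this means $\sup_{M_t}|A|(\cdot,t)\to\infty$ as $t\nearrow T$. The entire task thus reduces to producing, from the subcritical hypothesis, a uniform a priori bound for $|A|$ on $M\times[0,T)$, contradicting the blow-up.

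The first step is to write the evolution equations for $|A|^{2}$ (and for its higher powers $|A|^{2p}$) along the flow in the locally symmetric manifold $N^{n+1}$. The hypothesis $\bar\nabla\bar R=0$ is precisely what kills the first-covariant-derivative curvature terms in the Simons-type computation and yields a clean pointwise inequality of the schematic form
\[
\frac{\partial}{\partial t}|A|^{2}\leq\Delta|A|^{2}-2|\nabla A|^{2}+2|A|^{4}+c_{1}|A|^{2}+c_{0},
\]
with $c_{0},c_{1}$ depending only on the bounded-geometry data of $N$; an analogous inequality persists after multiplication by $|A|^{2p-2}$. Without local symmetry an extra term of the form $c\,|A|\,|\nabla A|$ involving $\bar\nabla\bar R$ would enter and would be hard to absorb in the iteration below.

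The second step is a parabolic Moser iteration in the spirit of Le-Sesum \cite{Le-Sesum}, Xu-Ye-Zhao \cite{XYZ1}, and Le \cite{Le}: multiply the evolution inequality by $|A|^{2p-2}$, integrate over $M_{t}$, integrate by parts, and invoke the Michael-Simon Sobolev inequality, whose constants on submanifolds of $N$ are controlled uniformly by the bounded-geometry assumption. To extract the logarithmic refinement I would borrow Le's truncation idea: on the super-level set $\{|A|>K\}$ the weight $\log(a+|A|)$ is bounded below by $\log(a+K)$, so each occurrence of the critical quantity $|A|^{n+2}$ appearing in the iteration can be replaced by $\log^{-1}(a+K)$ times the globally integrable subcritical quantity $|A|^{n+2}/\log(a+|A|)$. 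Since $\log^{-1}(a+K)$ can be made arbitrarily small by choosing $K$ large, this manufactures the smallness needed to close the iteration on $\{|A|>K\}$, while on the complementary region $\{|A|\le K\}$ the quantity $|A|$ is trivially bounded.

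The main obstacle, as I see it, is the careful bookkeeping of the lower-order curvature terms $c_{1}|A|^{2}+c_{0}$ throughout the iteration: they must be absorbed at every dyadic step without ever consuming the small factor $\log^{-1}(a+K)$ that is the whole source of Le's refinement. A secondary technical point is tracking the $p$-dependence of the constants arising from the Michael-Simon inequality and the ambient bounded geometry, since the iteration ultimately sends $p\to\infty$; this step is routine in Euclidean space but has to be rechecked here because of the new curvature constants entering each estimate.
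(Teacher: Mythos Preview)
Your setup is fine---the reduction to showing $\sup_{M_t}|A|$ stays bounded on $[0,T)$, the role of local symmetry in killing the $\bar\nabla\bar R$ terms, and the use of Moser iteration are all correct and match the paper. The gap is in your mechanism for extracting the logarithmic gain. You write that on $\{|A|>K\}$ the weight $\log(a+|A|)$ is bounded below by $\log(a+K)$, and conclude that $|A|^{n+2}$ can be replaced by $\log^{-1}(a+K)$ times $|A|^{n+2}/\log(a+|A|)$. But the inequality runs the other way: from $\log(a+|A|)\ge\log(a+K)$ one gets
\[
|A|^{n+2}\;=\;\log(a+|A|)\cdot\frac{|A|^{n+2}}{\log(a+|A|)}\;\ge\;\log(a+K)\cdot\frac{|A|^{n+2}}{\log(a+|A|)},
\]
so the critical quantity dominates a \emph{large} multiple of the subcritical one, not a small multiple. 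There is no smallness to be manufactured this way, and the iteration cannot be closed on $\{|A|>K\}$ by this device.

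The paper (following Le) does not try to gain the logarithm inside the Moser iteration at all. Instead the iteration is used only to prove the \emph{supercritical} a priori estimate of Proposition~\ref{smallcont2},
\[
\sup_{x\in M_t}|A(x,t)|\;\le\;c_\lambda\Bigl(1+\int_0^t\!\!\int_{M_s}|A|^{n+3}\,d\mu\,ds\Bigr),\qquad t\ge\lambda,
\]
and the logarithm enters afterwards through an Osgood--Gronwall argument: writing $|A|^{n+3}=\Psi(|A|)\cdot\dfrac{|A|^{n+2}}{\log(a+|A|)}$ with $\Psi(s)=s\log(a+s)$ and setting $f(t)=\sup_{M_t}|A|$, one obtains
\[
f(t)\;\le\;c\Bigl(1+\int_0^t\Psi\bigl(f(s)\bigr)\,G(s)\,ds\Bigr),\qquad G(s)=\int_{M_s}\frac{|A|^{n+2}}{\log(a+|A|)}\,d\mu.
\]
Because $\displaystyle\int^\infty\frac{ds}{\Psi(s)}=\int^\infty\frac{ds}{s\log(a+s)}=\infty$ and $\int_0^T G(s)\,ds<\infty$ by hypothesis, the Osgood criterion forces $\sup_{[\tau_1,T)}f<\infty$. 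This ODE step is the missing idea in your proposal; once you have it, the lower-order terms $c_1|A|^2+c_0$ you worry about are harmless, since they only affect constants in Proposition~\ref{smallcont2} and never interact with the logarithm.
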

Obviously, Theorem \ref{wuN2} includes Theorem \ref{gbsclog} proved
by N. Le in \cite{Le}. To prove Theorem \ref{wuN2}, we mainly follow
the ideas of the proof of Theorem \ref{gbsclog} in \cite{Le}, where
a blow up argument and the Moser iteration are employed. In our
setting, one difference of the proof of Theorem \ref{gbsclog} is
that the Michael-Simon inequality used in the Euclidean case should
be replaced by the Hoffman-Spruck Sobolev inequality for
submanifolds of a Riemannian manifold (see Lemma \ref{Hoff-Spr}
below). Another difference is that curvature of the ambient space
$N^{n+1}$ will interfere with the evolution of the second
fundamental form on $M^n$, and will may further interfere the Moser
iteration process. However, if we assume the ambient space satisfies
some special restrictions, then we can still go through along the
Le's proof of Theorem \ref{gbsclog}.

Moreover, as a corollary Theorem \ref{wuN2} implies the following
result.

\begin{corollary}\label{wuN1}
Suppose $T<\infty$ is the first singularity time for a mean
curvature flow \eqref{MCF1} of compact hypersurfaces of a locally
symmetric Riemannian manifold $N^{n+1}$ with bounded geometry. If
\[
\int_0^T\int_{M_t}|A|^\alpha d\mu dt<\infty,
\]
for some $\alpha\geq n+2$, then this flow can be extended past time
$T$.
\end{corollary}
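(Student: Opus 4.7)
The plan is to derive Corollary \ref{wuN1} as a direct consequence of Theorem \ref{wuN2} by showing that, for a suitable choice of $a\geq 1$, the hypothesis of Theorem \ref{wuN2} is implied by the hypothesis of the corollary.

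Specifically, I would fix $a=e$, so that $\log(a+|A|)\geq \log e=1$ pointwise on $M\times[0,T)$. This gives
\[
\frac{|A|^{n+2}}{\log(a+|A|)}\leq |A|^{n+2},
\]
so it suffices to establish that $\int_0^T\!\int_{M_t}|A|^{n+2}\,d\mu\,dt<\infty$.

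To absorb the gap between $n+2$ and $\alpha$, I would use the elementary pointwise bound $|A|^{n+2}\leq 1+|A|^{\alpha}$, obtained by splitting into $\{|A|\leq 1\}$ and $\{|A|>1\}$ and using $\alpha\geq n+2$. This reduces the task to showing that $\int_0^T|M_t|\,dt<\infty$. For this I would invoke the standard first variation formula for the mean curvature flow, which holds in any Riemannian ambient: the induced measure evolves by $\partial_t\,d\mu=-H^2\,d\mu$, so $|M_t|$ is non-increasing and $\int_0^T|M_t|\,dt\leq T|M_0|<\infty$. Combining these ingredients yields
\[
\int_0^T\!\int_{M_t}\frac{|A|^{n+2}}{\log(e+|A|)}\,d\mu\,dt\leq T|M_0|+\int_0^T\!\int_{M_t}|A|^{\alpha}\,d\mu\,dt<\infty,
\]
and Theorem \ref{wuN2} applied with $a=e$ delivers the desired extension past $T$.

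There is essentially no obstacle in this argument, since the locally symmetric and bounded geometry hypotheses on $N^{n+1}$ are inherited verbatim from the corollary to the theorem. The only point meriting a quick check is the area monotonicity $\partial_t\,d\mu=-H^2\,d\mu$, which follows from $\partial_t g_{ij}=-2Hh_{ij}$ and does not rely on any property of the ambient curvature.
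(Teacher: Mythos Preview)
Your proof is correct and follows essentially the same strategy as the paper: reduce to Theorem \ref{wuN2} by choosing $a$ large enough that $\log(a+|A|)\geq 1$ (the paper takes $a=100$, you take $a=e$). The only cosmetic difference is that the paper invokes H\"{o}lder's inequality to pass from $\alpha>n+2$ to $\alpha=n+2$, whereas you use the pointwise bound $|A|^{n+2}\leq 1+|A|^{\alpha}$ together with the explicit observation $\int_0^T|M_t|\,dt\leq T|M_0|<\infty$; this last point is in fact needed for the paper's H\"{o}lder step as well, so your write-up is slightly more complete.
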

\begin{remark}
Corollary \ref{wuN1} generalizes a recent result of H.-W. Xu, F. Ye
and E.-T. Zhao (see Theorem 1.1 in \cite{XYZ1}). When the ambient
space $N$ is the Euclidean space, we recover their result. Moreover
Corollary \ref{wuN1} also extends a special case proved by N. Le and
N. Sesum \cite{Le-Sesum2} (see Theorem \ref{dglobscal} above).
\end{remark}

Besides the above works, the closest precedent for the mean
curvature flow is the Ricci flow introduced by R.S. Hamilton in his
seminal paper \cite{Hamilton} (see also \cite{[CLN]}), where
Hamilton showed that if $T<\infty$ is the maximal existence time of
a closed Ricci flow solution $g(t)$, $t\in [0,T)$, then the supremum
of the Riemannian curvature blows up as $t\rightarrow T$. In other
words, a uniform bound for the Riemannian curvature on a finite time
interval $M\times[0,T)$ is enough to extend Ricci flow past time
$T$. Later, N. Sesum \cite{Sesum} employed Perelman's no local
collapsing theorem \cite{[Perelman]} and improved the Hamilton's
extension result. That is, if the norm of Ricci curvature is
uniformly bounded over a finite time interval $[0,T)$, then we can
extend the flow smoothly past time $T$. In \cite{Wang}, B. Wang
improved the Sesum's result by a blow-up argument and the Moser
iteration. He showed that if Ricci curvature is uniformly bounded
from below and the scalar curvature is bounded in certain integral
sense on a finite time interval $[0,T)$, then the Ricci flow can be
extended past time $T$. Most recently, N. Le and N. Sesum in
\cite{Le-Sesum3}, and J. Enders, R. M\"{u}ller and P. Topping in
\cite{EMT} independently showed that the Type I Ricci flow can be
extended past time $T$ if the scalar curvature is uniformly bounded
over a finite time interval $[0,T)$.

The rest of this paper is organized as follows. In Sect.~\ref{sec2},
we will give some basic notation and preliminary results. In
Sect.~\ref{sec3}, we will obtain Sobolev inequalities for the mean
curvature flow in Riemannian manifolds.  In Sect.~\ref{sec4}, we
will give the reverse H\"{o}lder and Harnack inequalities for a
subsolution to a parabolic equation evolving by the mean curvature
flow in Riemannian manifolds. In Sect.~\ref{sec5}, we will prove
Proposition \ref{smallcont}, which means the second fundamental form
can be bounded by its certain integral sense. In Sect.~\ref{sec7},
we will apply Proposition \ref{smallcont} to finish the proof of
Theorem \ref{wuN2}. In Sect.~\ref{sec8}, we will prove Corollary
\ref{wuN1} using Theorem \ref{wuN2}.


\section{Preliminaries}\label{sec2}
In the following sections, Latin indices range from $1$ to $n$,
Greek indices range from $0$ to $n$ and the summation convention is
understood. Let
\[
F(\cdot, t): M^{n}\rightarrow N^{n+1}
\]
be a one-parameter family of smooth hypersurfaces immersed in a
Riemannian manifold $N$ satisfying the evolution equation
(\ref{MCF1}). We denote the induced metric and the second
fundamental form on $M$ by $g=\{g_{ij}\}$ and $A=\{h_{ij}\}$. The
mean curvature of $M$ is the trace of the second fundamental form,
$H=g^{ij}h_{ij}$. The square of the second fundamental form is
denoted by $|A|=g^{ij}g^{kl}h_{ik}h_{jl}=h_{ik}h^{ik}$. We write
$\bar{R}m=\{\bar{R}_{\alpha\beta\gamma\delta}\}$ and
$\bar{\nabla}\bar{R}m=\{\bar{\nabla}_\sigma\bar{R}_{\alpha\beta\gamma\delta}\}$
for the curvature tensor of $N$ and its covariant derivative. Let
$\nu$ be the outer unit normal to $M_t$. Then for a fixed time $t$,
we can choose a local field of frame $e_0, e_1, \cdots, e_n$ in $N$
such that when restricted to $M_{t}$, we have $e_0=\nu$, $e_i
=\frac{\partial F}{\partial x_i}$. The relations between
$A=\{h_{ij}\}$, $Rm$ and $\bar{R}m$ are then given by the following
equations of Gauss and Codazzi:
\begin{equation}\label{Gauss}
R_{ijkl} = \bar{R}_{ijkl}+h_{ik}h_{jl}-h_{il}h_{jk},
\end{equation}
\begin{equation}\label{eq-cod2}
\nabla_k h_{ij}-\nabla_j h_{ik}=\bar{R}_{0ijk}.
\end{equation}
In \cite{Huisken86}, we have the following evolution equations:
\begin{eqnarray*}
\frac{\partial}{\partial t}g_{ij}&=&-2Hh_{ij},\\
\frac{\partial}{\partial t}H&=&\triangle
H+H\left(|A|^2+\bar{R}ic(\nu,\nu)\right),\\
\frac{\partial}{\partial t}|A|^2&=&\triangle|A|^2-2|\nabla A|^2
+2|A|^2(|A|^2+\bar{R}ic(\nu,\nu))\\
&&-4(h^{ij}h^m_j\bar{R}_{mli}\
^l-h^{ij}h^{lm}\bar{R}_{milj})\\
&&-2h^{ij}(\bar{\nabla}_j\bar{R}_{0li}\
^l+\bar{\nabla}_l\bar{R}_{0ij}\ ^l),
\end{eqnarray*}
where $\bar{R}ic(\nu,\nu)=\bar{R}_{0l0}\ ^{l}$.

\section{Sobolev inequalities for the MCF}\label{sec3}

In this section we first introduce the Hoffman-Spruck Sobolev
inequality. Then we apply it to prove the following important
Lemma~\ref{lemm1}, which is essential in the proof of a Sobolev
inequality, Proposition~\ref{propo1zh}, for the mean curvature flow
\eqref{MCF1}. More importantly, this Sobolev inequality will be
crucial for the Moser iteration in the next sections.

Now we present the following Hoffman-Spruck Sobolev inequality for
Riemannian submanifolds in~\cite{[Hoffman-Spruck]}.

\begin{lemma}[D. Hoffman and J. Spruck~\cite{[Hoffman-Spruck]}]
\label{Hoff-Spr} Let $M\to N$ be an isometric immersion of
Riemannian manifolds of dimension $n$ and $n+p$, $(p\geq 1)$,
respectively. Some notations are adopted as before. Assume $K_N\leq
b^2$ and let $h$ be a non-negative $C^1$ function on $M$ vanishing
on $\partial M$. Then
\begin{equation}\label{sobolev}
\left(\int_{M} h^{n/(n-1)}dV_{M}\right)^{(n-1)/n}\leq c(n)\int_{M}
\left[|\nabla h|+h|H|\right]dV_{M},
\end{equation}
provided
\begin{equation}\label{condition1}
b^2(1-\alpha)^{-2/n}\left(\omega_n^{-1}Vol(\mathrm{supp}
h)\right)^{2/n}\leq 1
\end{equation}
and
\begin{equation}\label{condition2}
2\rho_0\leq\bar{R}(M),
\end{equation}
where
\begin{equation} \label{canshu1}
\rho_0=\left\{
\begin{aligned}
        b^{-1}\sin^{-1}\left[b(1-\alpha)^{-1/n}\left(\omega_n^{-1}Vol(\mathrm{supp}
h)\right)^{1/n}\right]\quad\quad&\mathrm{for}\,\,\, b\,\,\, \mathrm{real},\\
(1-\alpha)^{-1/n}\left(\omega_n^{-1}Vol(\mathrm{supp}h)\right)^{1/n}
\quad\quad&\mathrm{for}\,\,\, b\,\,\, \mathrm{imaginary}.
\end{aligned}
\right.
\end{equation}
Here $\alpha$ is a free parameter, $0<\alpha<1$, and
\begin{equation}\label{canshu2}
c(n):=c(n,\alpha)=\pi\cdot
2^{n-1}\alpha^{-1}(1-\alpha)^{-1/n}\frac{n}{n-1}\omega_n^{-1/n}.
\end{equation}
\end{lemma}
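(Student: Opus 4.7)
The plan is to follow the Michael--Simon scheme, adapted to a Riemannian ambient using comparison with a space form of constant curvature $b^{2}$. I would first reduce the Sobolev inequality to an isoperimetric-type inequality via the co-area formula: writing $h=\int_{0}^{\infty}\chi_{\{h>t\}}\,dt$ and using Minkowski's integral inequality together with $\int_{M}|\nabla h|\,dV_{M}=\int_{0}^{\infty}\mathrm{Vol}_{n-1}(\{h=t\})\,dt$, the desired bound follows from an inequality of the form
\begin{equation*}
\mathrm{Vol}(\Omega)^{(n-1)/n}\le c(n)\Bigl(\mathrm{Vol}_{n-1}(\partial\Omega)+\int_{\Omega}|H|\,dV_{M}\Bigr)
\end{equation*}
applied to the superlevel sets $\Omega=\{h>t\}$. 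This reduction also explains why the hypothesis involves only $\mathrm{supp}\,h$.

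Next I would establish the isoperimetric inequality above for each $\Omega$ with smooth boundary. The Michael--Simon argument in $\mathbb{R}^{n+p}$ proceeds by testing the first variation formula against a radial vector field $X(y)=\varphi(|y-x_{0}|)(y-x_{0})$ for a well-chosen center $x_{0}$ and a suitable cut-off $\varphi$, then extracting a lower density bound. To pass to a Riemannian $N$ with $K_{N}\le b^{2}$, I would replace the Euclidean radial vector field by the gradient of the ambient distance function $r(y)=d_{N}(x_{0},y)$, or equivalently by its image under the exponential map at $x_{0}$. By the Rauch/Hessian comparison theorem, up to explicit correction factors depending on $b$ the field $\nabla r$ behaves like its space-form counterpart; the Laplacian and divergence identities used in Michael--Simon then acquire controlled error terms proportional to $b$. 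Conditions~(\ref{condition1}) and~(\ref{condition2}) are exactly what is needed to carry out this comparison: (\ref{condition2}) keeps the relevant geodesic ball inside the injectivity radius $\bar{R}(M)$, while (\ref{condition1}) ensures that the radius $\rho_{0}$ in~(\ref{canshu1}) lies in the regime where $\mathrm{Hess}\,r$ is controlled (below the conjugate radius $\pi/b$ for $b$ real).

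With these comparison estimates in hand, the core argument proceeds as in the Euclidean case. One selects $x_{0}\in\Omega$ at which a suitable density-ratio function is minimized, tests the divergence theorem on $\Omega\cap B_{\rho_{0}}(x_{0})$ against the modified radial field, and obtains a Euclidean-type density lower bound of the form $\mathrm{Vol}(\Omega\cap B_{\rho}(x_{0}))\ge c(n,\alpha)\rho^{n}$ once one pays the price of the mean curvature integral and boundary term. Coupling this density bound with a covering argument yields the isoperimetric inequality, and tracking the constants gives the explicit $c(n)=\pi\cdot 2^{n-1}\alpha^{-1}(1-\alpha)^{-1/n}\tfrac{n}{n-1}\omega_{n}^{-1/n}$ of~(\ref{canshu2}); the factor $(1-\alpha)^{-1/n}$ records the slack taken in the minimum-density selection.

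The main obstacle, and the place where Hoffman--Spruck's work genuinely goes beyond Michael--Simon, is the curvature bookkeeping: one must show that the Jacobi-field corrections arising from $K_{N}\le b^{2}$ in the divergence identity for $\nabla r$ can be absorbed either into the boundary and mean-curvature terms or into the constant $c(n)$, without spoiling the critical exponent $n/(n-1)$. This is exactly the role of the volume-smallness condition~(\ref{condition1}): it keeps the argument $b(1-\alpha)^{-1/n}(\omega_{n}^{-1}\mathrm{Vol}(\mathrm{supp}\,h))^{1/n}$ of $\sin^{-1}$ in~(\ref{canshu1}) below $1$, so that the comparison functions are uniformly non-degenerate and the correction terms remain bounded by a constant depending only on $n$ and $\alpha$.
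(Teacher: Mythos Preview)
The paper does not prove this lemma. It is quoted verbatim from Hoffman and Spruck~\cite{[Hoffman-Spruck]} as a known external result, with no proof given; the paper only uses it as a tool (via Lemma~\ref{lemm1} and Proposition~\ref{propo1zh}) to run the Moser iteration. So there is no ``paper's own proof'' to compare your proposal against.

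That said, your sketch is a reasonable outline of the original Hoffman--Spruck argument: the reduction to an isoperimetric inequality via co-area, the Michael--Simon testing of the first variation against a radial field, and the replacement of the Euclidean radial field by $\nabla r$ with Rauch/Hessian comparison under $K_N\le b^2$ are indeed the ingredients of their proof, and conditions~\eqref{condition1}--\eqref{condition2} play exactly the role you describe. If you intend to include a proof, you should consult~\cite{[Hoffman-Spruck]} directly to fill in the curvature bookkeeping you flag as the main obstacle; but for the purposes of this paper the lemma is simply cited, and no proof is expected.
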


\begin{remark}
In Lemma \ref{Hoff-Spr}, we may replace the assumption $h\in C^1(M)$
by $h\in W^{1,1}(M)$. As the mentioned remark
in~\cite{[Hoffman-Spruck]}, the optimal choice of $\alpha$ to
minimize $c$ is $\alpha=n/(n+1)$. When $b$ is real we may replace
condition~\eqref{condition2} by the stronger condition
$\bar{R}\geq\pi b^{-1}$. When $b$ is a pure imaginary number and the
Riemannian manifold $N$ is simply connected and complete,
$\bar{R}(M)=+\infty$. Hence conditions~\eqref{condition1}
and~\eqref{condition2} are automatically satisfied.
\end{remark}

Following the proof of Lemma 2.1 in~\cite{Le} by means of Lemma
\ref{Hoff-Spr}, we have the following general result.
\begin{lemma}\label{lemm1}
Let $M$ be a compact $n$-dimensional hypersurface without boundary,
which is smoothly embedded in $N^{n+1}$. Assume $K_N\leq b^2$. Let
\begin{equation}
Q = \left\{ \begin{aligned}
\frac{n}{n-2} &\quad \text{if}~ n>2\\
<\infty &\quad \text{if} ~ n=2
\end{aligned}
\right.
\end{equation}
Then, for all non-negative Lipschitz functions $v$ on $M$, we have
\[
\|v\|^2_{L^{2Q}(M)}\leq c_n\left(\|\nabla v\|^2_{L^{2}(M)}+
\|H\|^{\frac{2(n+3)}{3}}_{L^{n+3}(M)}\|v\|^2_{L^2(M)}\right)
\]
provided the function $h:=v^{\frac{2(n-1)}{n-2}}$ satisfies
conditions~\eqref{condition1} and~\eqref{condition2}, where $H$ is
the mean curvature of $M$ and $c_n$ is a positive constant depending
only on $n$.
\end{lemma}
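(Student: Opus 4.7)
The plan is to follow the scheme of Le's proof of his Lemma 2.1 in \cite{Le}, replacing the Michael--Simon inequality with the Hoffman--Spruck Sobolev inequality (Lemma \ref{Hoff-Spr}). The admissibility hypotheses \eqref{condition1}--\eqref{condition2} imposed on $h = v^{2(n-1)/(n-2)}$ are precisely what legitimises this substitution. I sketch the main case $n > 2$; when $n = 2$ the same scheme works with $h = v^{\sigma}$ for a parameter $\sigma = \sigma(Q)$ chosen to match the target exponent.

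First, I would apply Lemma \ref{Hoff-Spr} with the Lipschitz test function $h = v^{2(n-1)/(n-2)}$. Since $h^{n/(n-1)} = v^{2n/(n-2)} = v^{2Q}$ and $|\nabla h| = \frac{2(n-1)}{n-2}\,v^{n/(n-2)}|\nabla v|$, this produces
\[
\Bigl(\int_M v^{2Q}\,dV_M\Bigr)^{(n-1)/n} \le C(n)\int_M v^{n/(n-2)}|\nabla v|\,dV_M + c(n)\int_M v^{2(n-1)/(n-2)}|H|\,dV_M.
\]
Cauchy--Schwarz bounds the first integral on the right by $\bigl(\int_M v^{2Q}\bigr)^{1/2}\|\nabla v\|_{L^2(M)}$.

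Next, for the mean curvature term I would invoke a three--factor Hölder inequality, splitting the integrand as $v^a\cdot v^b\cdot |H|$ with $a + b = 2(n-1)/(n-2)$ and with conjugate exponents $2(n+3)/(2n+1)$, $2(n+3)/3$, $n+3$. The forced choice $a = n(2n+1)/((n-2)(n+3))$, $b = 3/(n+3)$ yields
\[
\int_M v^{2(n-1)/(n-2)}|H|\,dV_M \le \Bigl(\int_M v^{2Q}\Bigr)^{(2n+1)/(2(n+3))}\|v\|_{L^2(M)}^{3/(n+3)}\|H\|_{L^{n+3}(M)}.
\]
Dividing the resulting estimate by $\bigl(\int_M v^{2Q}\bigr)^{1/2}$ and using $\frac{n-1}{n}-\frac12 = \frac{n-2}{2n}$ together with $\frac{2n+1}{2(n+3)} - \frac12 = \frac{n-2}{2(n+3)}$, I reach
\[
\|v\|_{L^{2Q}(M)} \le C\|\nabla v\|_{L^2(M)} + C\Bigl(\int_M v^{2Q}\Bigr)^{(n-2)/(2(n+3))}\|v\|_{L^2(M)}^{3/(n+3)}\|H\|_{L^{n+3}(M)}.
\]
Finally, I apply Young's inequality to the last term with conjugate exponents $(n+3)/n$ and $(n+3)/3$. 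This converts the factor $\bigl(\int v^{2Q}\bigr)^{(n-2)/(2(n+3))}$ into an $\varepsilon$--multiple of $\|v\|_{L^{2Q}} = \bigl(\int v^{2Q}\bigr)^{(n-2)/(2n)}$ together with the residual $C(\varepsilon)\|v\|_{L^2(M)}\|H\|_{L^{n+3}(M)}^{(n+3)/3}$. Absorbing the small term into the left--hand side and squaring gives the asserted inequality.

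The main obstacle is the exponent bookkeeping in the three--factor Hölder step: the parameters are forced by demanding simultaneously that (i) the mean curvature factor appears precisely in $L^{n+3}$, (ii) $\|v\|_{L^2}$ appears to a positive power matching the $2(n+3)/3$ power of $\|H\|$ ultimately produced by Young, and (iii) the residual exponent on $\int v^{2Q}$ is strictly less than $1/2$ so that it can be absorbed at the end. The last condition reduces to $(2n+1)/(2(n+3)) < (n-1)/n$, equivalently $n > 2$, which is exactly where the dimension hypothesis of the lemma enters; for $n = 2$ one is forced to re--choose the power in $h$ and redo the Hölder computation for the desired finite $Q$.
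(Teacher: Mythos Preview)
Your proposal is correct and follows essentially the same approach as the paper, which does not give an explicit proof but simply refers to Le's argument for his Lemma~2.1 with the Hoffman--Spruck inequality (Lemma~\ref{Hoff-Spr}) in place of Michael--Simon. You have carried out precisely this substitution, and your exponent bookkeeping in the three-factor H\"older and Young steps is accurate.
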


\begin{remark}
In Lemma~\ref{lemm1}, when $b$ is a pure imaginary number and the
Riemannian manifold $N$ is simply connected and complete,
$\bar{R}(M)=+\infty$. Hence conditions~\eqref{condition1}
and~\eqref{condition2} are automatically satisfied.
\end{remark}

Similar to the proof of Proposition 2.1 in \cite{Le}, using
Lemma~\ref{lemm1} and H\"{o}lder's inequality, our Sobolev type
inequality for the mean curvature flow in Riemannian manifolds is
stated in the following proposition.

\begin{proposition}\label{propo1zh}
For all non-negative Lipschitz functions $v$, we have
\begin{equation*}
\begin{aligned}
&||v||^{\beta}_{L^{\beta}(M\times[0,T))}\\
&\leq c_n\max_{0\leq t\leq T}||v||^{4/n}_{L^2(M_t)}\left(||\nabla
v||^2_{L^2(M\times[0,T))}+\max_{0\leq t\leq T}||v||^2_{L^2(M_t)}
||H||^{\frac{2(n+3)}{3}}_{L^{n+3}(M\times[0,T))}\right)
\end{aligned}
\end{equation*}
provided the function $h:=v^{\frac{2(n-1)}{n-2}}$ satisfies
conditions~\eqref{condition1} and~\eqref{condition2} for any
$t\in[0,T)$, where $\beta:=\frac{2(n+2)}{n}$.
\end{proposition}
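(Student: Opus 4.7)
The plan is to apply Lemma \ref{lemm1} at each fixed time $t\in[0,T)$, then use a H\"older interpolation between $L^{2Q}(M_t)$ and $L^{2}(M_t)$ to produce the space-time $L^\beta$ norm, and finally integrate in $t$.

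Fix $t \in [0, T)$ and first assume $n>2$ (the case $n=2$ is analogous, since $Q$ can then be taken arbitrarily large and finite). Since $\beta = 2(n+2)/n = 2 + 4/n$, I would apply H\"older's inequality to the product $v^\beta = v^{2} \cdot v^{4/n}$ with conjugate exponents $Q = n/(n-2)$ and $n/2$ to obtain
\[
\int_{M_t} v^\beta\, d\mu_t \leq \Bigl(\int_{M_t} v^{2Q}\Bigr)^{(n-2)/n} \Bigl(\int_{M_t} v^{2}\Bigr)^{2/n} = \|v\|^2_{L^{2Q}(M_t)}\, \|v\|^{4/n}_{L^2(M_t)}.
\]
Because $h = v^{2(n-1)/(n-2)}$ is assumed to satisfy \eqref{condition1}--\eqref{condition2} at every $t$, Lemma \ref{lemm1} applies and bounds $\|v\|^2_{L^{2Q}(M_t)}$ by $c_n\bigl(\|\nabla v\|^2_{L^2(M_t)} + \|H\|^{2(n+3)/3}_{L^{n+3}(M_t)} \|v\|^2_{L^2(M_t)}\bigr)$.

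Next, I would integrate the resulting pointwise-in-$t$ estimate over $[0,T)$, pulling the factors $\|v\|^{4/n}_{L^2(M_t)}$ and $\|v\|^2_{L^2(M_t)}$ out of the time integral by $\max_{0\le t\le T}$. The gradient term assembles directly into $\|\nabla v\|^2_{L^2(M\times[0,T))}$. For the mean curvature term one has
\[
\int_0^T \|H\|^{2(n+3)/3}_{L^{n+3}(M_t)}\,dt = \int_0^T \Bigl(\int_{M_t}|H|^{n+3}\Bigr)^{2/3}\,dt,
\]
and a final H\"older inequality in $t$ with exponents $3/2$ and $3$ yields
\[
\int_0^T \Bigl(\int_{M_t}|H|^{n+3}\Bigr)^{2/3}\,dt \leq T^{1/3}\, \|H\|^{2(n+3)/3}_{L^{n+3}(M\times[0,T))},
\]
with the harmless factor $T^{1/3}$ absorbed into the constant (permissible since $T<\infty$). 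Combining these pieces produces exactly the stated inequality.

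The main obstacle here is pure bookkeeping: one must verify that the H\"older exponents on $M_t$ conspire so that the target power $\beta = 2(n+2)/n$ arises from pairing the Sobolev exponent $2Q$ with $L^2$, and that the peculiar exponent $2(n+3)/3$ on $\|H\|_{L^{n+3}}$ survives the time integration. No fresh geometric input beyond Lemma \ref{lemm1} (which already packages Hoffman--Spruck for submanifolds of a Riemannian ambient space) is needed --- Proposition \ref{propo1zh} is essentially the parabolic assembly of that slice-wise Sobolev estimate.
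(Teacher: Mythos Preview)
Your proposal is correct and follows the paper's indicated approach exactly: the paper does not spell out a proof but states that Proposition~\ref{propo1zh} follows ``similar to the proof of Proposition~2.1 in~\cite{Le}, using Lemma~\ref{lemm1} and H\"older's inequality,'' and your H\"older interpolation on each slice between $L^{2Q}(M_t)$ and $L^2(M_t)$, followed by Lemma~\ref{lemm1} and time integration, is precisely that argument. Your observation about the extra $T^{1/3}$ factor is accurate and harmless in context, since every application in the paper (Lemmas~\ref{sorevho} and~\ref{moser}, Proposition~\ref{smallcont}) is carried out on the unit interval $[0,1]$.
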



\section{Reverse H\"{o}lder and Harnack inequalities}\label{sec4}
In this section we can follow the lines of~\cite{Le}
or~\cite{Le-Sesum}, and easily obtain a soft version of reverse
H\"{o}lder inequality and a Harnack inequality for parabolic
inequality during the mean curvature flow in Riemannian manifolds.
Consider the following differential inequality
\begin{equation}\label{keyeq}
\left(\frac{\partial}{\partial t}-\Delta\right) v\leq (f+C)v, ~v\geq
0,
\end{equation}
where the function $f$ has bounded $L^q(M\times [0, T))$-norm with
$q>\frac{n+2}{2}$, and $C$ is a fixed positive constant. Let
$\eta(t,x)$ be a smooth function with the property that $\eta(0,x) =
0$ for all $x$.
\begin{lemma}\label{sorevho}
Let
\begin{equation}\label{Czero}
C_0\equiv C_0(q)=||f||_{L^q(M\times[0,T))},\qquad
C_1=\left(1+||H||^{\frac{2(n+3)}{3}}_{L^{n+3}(M\times[0,
T))}\right)^{\frac{n}{n+2}},
\end{equation}
where $\beta>1$ be a fixed number and $q>\frac{n+2}{2}$. Then, there
exists a positive constant $C_a=C_a(n,q,C_0,C_1,C)$ such that
\begin{equation}
\begin{aligned}\label{xianzhi1}
 &||\eta^2 v^{\beta}||_{L^{\frac{n+2}{n}}(M\times[0,T))}\\
&\leq C_a\Lambda(\beta)^{1+\nu}\left|\left|v^{\beta}\left(\eta^2 +
|\nabla\eta|^2+2\eta \left|\left(\frac{\partial}{\partial
t}-\Delta\right)\eta\right|\right)\right|\right|_{L^1(M\times[0,T))},
\end{aligned}
\end{equation}
provided the function $(\eta
v^{\frac{\beta}{2}})^{\frac{2(n-1)}{n-2}}$ satisfies
conditions~\eqref{condition1} and~\eqref{condition2} for any
$t\in[0,T)$, where
\begin{equation}\label{nueq}
\nu=\frac{n+2}{2q-(n+2)},
\end{equation}
and $\Lambda(\beta)$ is a positive constant depending on $\beta$
such that $\Lambda(\beta)\geq 1$ if $\beta\geq 2$.
\end{lemma}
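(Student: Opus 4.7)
The plan is to obtain \eqref{xianzhi1} by combining a Moser-style energy estimate with the Sobolev inequality of Proposition \ref{propo1zh} and a final Hölder--Young absorption, mirroring the Euclidean argument of \cite{Le}.

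\textbf{Energy step.} I would multiply \eqref{keyeq} by $\beta\eta^{2}v^{\beta-1}$ and integrate on $M_{t}$, then in $t$ from $0$ to any $t_{0}\in[0,T)$. Using the product rule together with the volume-form evolution $\partial_{t}d\mu_{t}=-H^{2}d\mu_{t}$ recorded in Section \ref{sec2}, the time term produces $\frac{d}{dt}\!\int_{M_{t}}\!\eta^{2}v^{\beta}d\mu_{t}-\int\partial_{t}(\eta^{2})v^{\beta}+\int H^{2}\eta^{2}v^{\beta}$, with the $H^{2}$ contribution fortunately on the favorable side. Integration by parts on the closed manifold $M_{t}$ turns $\beta\!\int\eta^{2}v^{\beta-1}\Delta v$ into $-\beta(\beta-1)\!\int\eta^{2}v^{\beta-2}|\nabla v|^{2}-2\beta\!\int\eta v^{\beta-1}\nabla\eta\!\cdot\!\nabla v$; Cauchy--Schwarz on the cross term (using $\beta\ge 2$), combined with rewriting $\partial_{t}(\eta^{2})$ in terms of $(\partial_{t}-\Delta)\eta$ via one more integration by parts and the initial condition $\eta(0,\cdot)=0$, yields
$$\sup_{t\in[0,T)}\!\int_{M_{t}}\!\eta^{2}v^{\beta}d\mu_{t}+\!\int_{0}^{T}\!\!\int_{M_{t}}\!|\nabla(\eta v^{\beta/2})|^{2}\le\Lambda(\beta)\bigl[\mathcal{R}+\!\int_{0}^{T}\!\!\int_{M_{t}}\!(f+C)\eta^{2}v^{\beta}\bigr],$$
where $\mathcal{R}$ denotes the $L^{1}$ norm appearing on the right side of \eqref{xianzhi1}, and $\Lambda(\beta)$ is a polynomial in $\beta$ with $\Lambda(\beta)\ge 1$ for $\beta\ge 2$.

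\textbf{Sobolev upgrade.} Setting $w=\eta v^{\beta/2}$, the identity $w^{2(n+2)/n}=(\eta^{2}v^{\beta})^{(n+2)/n}$ allows me to invoke Proposition \ref{propo1zh} under the standing admissibility hypothesis on $(\eta v^{\beta/2})^{2(n-1)/(n-2)}$, upgrading the previous estimate to $Y\le \widetilde{C}\,\Lambda(\beta)\bigl[\mathcal{R}+\int_{0}^{T}\!\!\int_{M_{t}}f\eta^{2}v^{\beta}\bigr]$, where $Y:=\|\eta^{2}v^{\beta}\|_{L^{(n+2)/n}(M\times[0,T))}$ and the dependence on $\|H\|_{L^{n+3}}$ is absorbed into $\widetilde{C}$ through the constant $C_{1}$. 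The constant term $Cv$ causes no trouble since $\|\eta^{2}v^{\beta}\|_{L^{1}}\le\mathcal{R}$ (as $\eta^{2}$ is one of the summands in $\phi$).

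\textbf{Hölder--Young absorption and main obstacle.} By Hölder's inequality, $\int_{0}^{T}\!\!\int_{M_{t}}f\eta^{2}v^{\beta}\le C_{0}\,\|\eta^{2}v^{\beta}\|_{L^{q'}}$ with $q'=q/(q-1)$. Since $q>(n+2)/2$ we have $q'<(n+2)/n$, and interpolating between $L^{1}$ and $L^{(n+2)/n}$ gives $\|\eta^{2}v^{\beta}\|_{L^{q'}}\le\mathcal{R}^{\theta}Y^{1-\theta}$ with $\theta=(2q-n-2)/(2q)$. Young's inequality then absorbs $Y^{1-\theta}$ into $Y$, costing a factor $\Lambda(\beta)^{1/\theta}$; the crucial arithmetic identity $1/\theta=2q/(2q-n-2)=1+\nu$ (by \eqref{nueq}) delivers exactly the claimed power $\Lambda(\beta)^{1+\nu}$ and yields $Y\le C_{a}\Lambda(\beta)^{1+\nu}\mathcal{R}$. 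The delicate point of the whole argument is this last absorption: the interpolation exponent must be chosen precisely so that Young's inequality produces $\Lambda(\beta)^{1+\nu}$ rather than a larger power of $\beta$, since only this sharp exponent makes the estimate usable in the Moser iteration of Section \ref{sec5}. A secondary subtlety, not present in \cite{Le}, is ensuring that the $C_{1}$-dependence coming from the ambient curvature via Proposition \ref{propo1zh} is swept cleanly into $C_{a}$ and does not interact with the $\beta$-dependence.
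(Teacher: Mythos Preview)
Your proposal is correct and follows essentially the same route as the paper's proof: test with $\eta^{2}v^{\beta-1}$, rewrite the $\partial_t\eta$ contribution via one more integration by parts to produce $(\partial_t-\Delta)\eta$, apply the Sobolev inequality of Proposition~\ref{propo1zh} to $\eta v^{\beta/2}$, and finish with H\"older plus an interpolation/absorption between $L^{1}$ and $L^{(n+2)/n}$. The only cosmetic difference is that the paper writes the last step as an additive $\epsilon$-interpolation with the explicit choice $\epsilon=(2\Lambda(\beta)c_nC_0C_1)^{-1}$, whereas you use the equivalent multiplicative interpolation followed by Young; your identity $1/\theta=1+\nu$ is exactly what makes the two formulations agree, and the paper likewise chooses $\epsilon^{2}=(\beta-1)/4$ in the Cauchy--Schwarz step so that the argument goes through for all $\beta>1$, not only $\beta\ge 2$.
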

\begin{remark}
In fact, in Lemma \ref{sorevho} we can choose
\begin{equation}\label{Ca}
C_a(n,q,C_0,C_1,C)=(2c_nC_0C_1)^{1+\nu}+2c_nC_1(C+1).
\end{equation}
\end{remark}
\begin{proof}[Proof of Lemma~\ref{sorevho}]
We mainly follow the ideas of the proof of Lemma 3.1 in~\cite{Le}.
First, we use $\eta^2 v^{\beta-1}$ as a test function in the
following inequality
\[
-\Delta v+\frac{\partial v}{\partial t}\leq (f+C)v.
\]
Namely, for any $s\in (0, T]$,
\begin{equation}\label{int1}
\int_0^s\int_{M_t}(-\Delta v)\eta^2 v^{\beta-1}d\mu dt+
\int_0^s\int_{M_t} \frac{\partial v}{\partial t}\eta^2
v^{\beta-1}d\mu dt\leq\int_0^s\int_{M_t}|f+C|\eta^2 v^{\beta}d\mu
dt.
\end{equation}
Note that, using the integration by parts we have
\begin{equation}\label{int2}
\int_{M_t}(-\Delta v)\eta^2 v^{\beta-1}d\mu=\int_{M_t}2\langle\nabla
v,\nabla\eta\rangle\eta v^{\beta-1} d\mu +(\beta-1)\int_{M_t}\eta^2
v^{\beta-2}|\nabla v|^2 d\mu.
\end{equation}
By the properties of $\eta$ above, we get
\begin{equation}
\begin{aligned}\label{int3}
\int_0^s\int_{M_t}\frac{\partial v}{\partial t}\eta^2
v^{\beta-1}d\mu dt &=\frac{1}{\beta} \int_0^s\int_{M_t}
\frac{\partial( v^{\beta})}{\partial t}\eta^2 d\mu dt\\
&=\frac{1}{\beta}\int_{M_t}v^{\beta}\eta^2d\mu\Big|_0^s
-\frac{1}{\beta}\int_0^s\int_{M_t}
v^{\beta}\partial_t(\eta^2 d\mu)dt\\
&=\frac{1}{\beta}\int_{M_s}v^{\beta}\eta^2d\mu
-\frac{1}{\beta}\int_0^s\int_{M_t}v^{\beta}\left(2\eta
\frac{\partial \eta}{\partial t}-H^2\right)d\mu dt,
\end{aligned}
\end{equation}
where the last step we used the evolution of the volume form
\begin{equation}\label{volde}
\frac{\partial}{\partial t}d\mu=-H^2 d\mu.
\end{equation}
Substituting \eqref{int2} and \eqref{int3} into \eqref{int1} yields
\begin{equation}
\begin{aligned}\label{ibpart1}
&\int_0^s\int_{M_{t}}\left[2\langle\nabla v, \nabla\eta\rangle\eta
v^{\beta-1}+(\beta-1)\eta^2 v^{\beta-2}|\nabla v|^2\right]d\mu dt+
\frac{1}{\beta}\int_{M_s}v^{\beta}\eta^2d\mu\\
&\leq\frac{1}{\beta}\int_0^s\int_{M_t} v^{\beta} 2\eta
\frac{\partial \eta}{\partial t}d\mu dt +
\int_0^s\int_{M_t}|f+C|\eta^2 v^{\beta}d\mu dt.
\end{aligned}
\end{equation}
In the following, we want to construct the quantity
$(\frac{\partial}{\partial t}-\Delta)\eta$ to appear on the right
hand side of \eqref{ibpart1}. To achieve it, we find that
integrating by parts yields
\begin{equation*}
\begin{aligned}
\frac{1}{\beta}&\int_0^s\int_{M_t}v^{\beta} 2\eta \frac{\partial
\eta}{\partial
t}d\mu dt\\
&=\frac{1}{\beta}\int_0^s\int_{M_t}\left[v^{\beta}2\eta
\left(\frac{\partial}{\partial
t}-\Delta\right)\eta+v^{\beta}2\eta\Delta
\eta]\right)d\mu dt\\
&=\frac{1}{\beta}\int_0^s\int_{M_t}\left[v^{\beta}2\eta
\left(\frac{\partial}{\partial
t}-\Delta\right)\eta-2\nabla(v^{\beta}
\eta)\nabla\eta\right]d\mu dt\\
&=\frac{1}{\beta}\int_0^s\int_{M_t}\left[v^{\beta}2\eta
\left(\frac{\partial}{\partial t}-\Delta\right)\eta-
2v^{\beta}|\nabla\eta|^2-
2\beta\langle\nabla v, \nabla\eta\rangle\eta v^{\beta-1}\right]d\mu dt\\
&\leq\frac{1}{\beta}\int_0^s\int_{M_t}v^{\beta}2\eta
\left(\frac{\partial}{\partial t}-\Delta\right)\eta d\mu dt
-\int_0^s\int_{M_t}2\eta\langle\nabla v,\nabla\eta\rangle
v^{\beta-1}d\mu dt.
\end{aligned}
\end{equation*}
Hence \eqref{ibpart1} becomes
\begin{equation}
\begin{aligned}\label{ibpart2}
&\int_0^s\int_{M_t}\left[4\langle\nabla v, \nabla\eta\rangle\eta
v^{\beta-1}+(\beta-1)\eta^2 v^{\beta-2}|\nabla v|^2\right]d\mu dt+
\frac{1}{\beta}\int_{M_s}v^{\beta}\eta^2d\mu\\
&\leq\frac{1}{\beta}\int_0^s\int_{M_t}v^{\beta}2\eta
\left|\left(\frac{\partial}{\partial t}-\Delta\right)\eta\right|d\mu
dt+\int_0^s\int_{M_t}|f+C|\eta^2 v^{\beta}d\mu dt.
\end{aligned}
\end{equation}
Note that the Cauchy-Schwartz inequality implies
\begin{equation*}
\begin{aligned}
\int_0^s\int_{M_t}4\langle\nabla v, \nabla\eta\rangle\eta
v^{\beta-1}d\mu dt&\geq-2\epsilon^2\int_0^s\int_{M_t}
\eta^2v^{\beta-2}|\nabla v|^2d\mu dt\\
&\quad-\frac{2}{\epsilon^2}\int_0^s\int_{M_t}
v^{\beta}|\nabla\eta|^2 d\mu dt.
\end{aligned}
\end{equation*}
Hence \eqref{ibpart2} becomes
\begin{equation*}
\begin{aligned}
&\int_0^s\int_{M_{t}} (\beta-1-2\epsilon^2)\eta^2
v^{\beta-2}\abs{\nabla v}^2 d\mu dt +
\frac{1}{\beta}\int_{M_s}v^{\beta}\eta^2d\mu\\
&\leq \frac{1}{\beta}\int_0^s\int_{M_t}v^{\beta} 2\eta
\left|\left(\frac{\partial}{\partial t}-\Delta\right)\eta\right|d\mu
dt+\int_0^s\int_{M_t}|f+C|\eta^2v^{\beta}d\mu dt\\
&\quad+\frac{2}{\epsilon^2}\int_0^s\int_{M_t}
v^{\beta}|\nabla\eta|^2d\mu dt.
\end{aligned}
\end{equation*}
Noticing the relation
\[
|\nabla (v^{\beta/2})|^2=\frac{\beta^2}{4}v^{\beta-2}\abs{\nabla
v}^2,
\]
if we choose
$\epsilon^2=\frac{\beta-1}{4}$, then
\begin{equation*}
\begin{aligned}
&2\left(1-\frac{1}{\beta}\right)\int_0^s\int_{M_t} \eta^2
\abs{\nabla
(v^{\beta/2})}^2 d\mu dt+\int_{M_s}v^{\beta}\eta^2d\mu\\
&\leq\int_0^s\int_{M_t}v^{\beta}2\eta
\left|\left(\frac{\partial}{\partial t}
-\Delta\right)\eta\right|d\mu dt+\beta\int_0^s\int_{M_t}|f+C|\eta^2
v^{\beta}d\mu dt\\
&\quad+\frac{8\beta}{\beta-1}\int_0^s\int_{M_t} v^{\beta}|\nabla
\eta|^2d\mu dt.
\end{aligned}
\end{equation*}
Combining the above estimate with
\[
 \abs{\nabla(\eta v^{\beta/2})}^2\leq 2\eta^2
 \abs{\nabla (v^{\beta/2})}^2 + 2v^{\beta}\abs{\nabla\eta}^2
\]
implies
\begin{equation*}
\begin{aligned}
&\left(1-\frac{1}{\beta}\right)\int_0^s\int_{M_t}|\nabla(\eta
v^{\beta/2})|^2d\mu dt+\int_{M_s}v^{\beta}\eta^2d\mu\\
&\leq\int_0^s\int_{M_t}v^{\beta}2\eta\left|\left(\frac{\partial}{\partial
t}-\Delta\right)\eta\right|d\mu
dt+\beta\int_0^s\int_{M_t}|f+C|\eta^2
v^{\beta}d\mu dt\\
&\,\,\,\,\,\,+8\left(\frac{\beta}{\beta-1}+\frac{\beta-1}{\beta}\right)
\int_0^s\int_{M_t}v^{\beta}|\nabla\eta|^2d\mu dt.
\end{aligned}
\end{equation*}
It follows that, for some $\Lambda(\beta)\geq 1$ (for example, we
can choose $\Lambda (\beta) = 100\beta$ if $\beta\geq 2$),
\begin{equation*}
\begin{aligned}
&\int_0^s\int_{M_t}|\nabla(\eta v^{\beta/2})|^2d\mu dt+
\int_{M_s}v^{\beta}\eta^2d\mu\\
&\leq\Lambda(\beta)\int_0^s\int_{M_t}v^{\beta}\left\{2\eta
\left|\left(\frac{\partial}{\partial
t}-\Delta\right)\eta\right|+|\nabla\eta|^2\right\}d\mu dt\\
&\quad+\Lambda(\beta)\int_0^s\int_{M_t}|f+C|\eta^2v^{\beta}d\mu dt\\
&\leq \Lambda(\beta)\int_0^s\int_{M_t} v^{\beta}\left\{ 2\eta
\left|\left(\frac{\partial}{\partial t}-\Delta\right)
\eta\right|+|\nabla\eta|^2\right\}d\mu dt\\
&\quad+\Lambda(\beta)\left(||f||_{L^q(M\times[0,T))}||\eta^2
v^{\beta}||_{L^{\frac{q}{q-1}}(M\times[0,T))}
+\int_0^s\int_{M_t}C\eta^2v^{\beta}d\mu dt\right)\\
&=\Lambda(\beta)\int_0^s\int_{M_t}v^{\beta}\left\{2\eta
\left|\left(\frac{\partial}{\partial t}
-\Delta\right)\eta\right|+|\nabla\eta|^2\right\}d\mu dt\\
&\quad+\Lambda(\beta)\left(C_0||\eta^2
v^{\beta}||_{L^{\frac{q}{q-1}}(M\times[0,T))}+C||\eta^2
v^{\beta}||_{L^1(M\times[0,T))}\right)\\
&=: B.
\end{aligned}
\end{equation*}
From the above inequalities, we conclude
\begin{equation}
 \max_{0\leq s\leq T}\int_{M_s}\eta^2v^{\beta}d\mu\leq B
\end{equation}
and
\begin{equation}
 \int_0^T\int_{M_t}|\nabla(\eta v^{\beta/2})|^2 d\mu dt\leq B.
\end{equation}
If the function $(\eta v^{\frac{\beta}{2}})^{\frac{2(n-1)}{n-2}}$
satisfies conditions~\eqref{condition1} and~\eqref{condition2} for
any $t\in[0,T)$, applying Proposition \ref{propo1zh} to $\eta
v^{\beta/2}$, we have the following estimates
\begin{equation*}
\begin{aligned}
||&\eta^2v^{\beta}||^{(n+2)/n}_{L^{(n+2)/n}(M\times[0,T))}\\
&=||\eta v^{\beta/2}||^{2(n+2)/n}_{L^{2(n+2)/n}(M\times[0,T))}\\
&\leq c_n\max_{0\leq t\leq T} ||\eta
v^{\beta/2}||^{4/n}_{L^2(M_{t})}\\
&\quad\times\left(||\nabla(\eta
v^{\beta/2})||^2_{L^2(M\times[0,T))}+\max_{0\leq t\leq T}
||\eta v^{\beta/2}||^2_{L^2(M_t)}||H||^{n+2}_{L^{n+2}(M\times[0,T))}\right)\\
&\leq c_nB^{2/n}\left(B+B||H||^{n
+2}_{L^{n+2}(M\times[0,T))}\right)\\
&=c_nB^{\frac{n+2}{n}}\left(1+||H||^{n+2}_{L^{n+2}(M\times[0,T))}\right).
\end{aligned}
\end{equation*}
For the convenient of writing,  we let $S:=M\times [0, T)$ and let
the norm $||\cdot||_{L^p(M\times[0,T))}$ be shortly denoted by
$||\cdot||_{L^p(S)}$. Then by the definition of $B$, the previous
estimate can be rewritten as
\begin{equation}
\begin{aligned}\label{superc}
||\eta^2v^{\beta}||_{L^{\frac{n+2}{n}}(S)}
&\leq c_nB\left(1+||H||^{n+2}_{L^{n+2}(S)}\right)^{\frac{n}{n+2}}\\
&=c_nC_1\Lambda(\beta)\int_0^T\int_{M_t}v^{\beta}\left\{2\eta
\left|\left(\frac{\partial}{\partial t}-\Delta\right)\eta\right|
+|\nabla\eta|^2\right\}d\mu dt\\
&\quad+c_nC_1\Lambda(\beta)\left(C_0||\eta^2
v^{\beta}||_{L^{\frac{q}{q-1}}(S)}+C||\eta^2 v^{\beta}||_{L^1(S)}
\right).
\end{aligned}
\end{equation}
Since $1<\frac{q}{q-1}<\frac{n+2}{n}$, we have the interpolation
inequality
\[
||\eta^2 v^{\beta}||_{L^{\frac{q}{q-1}}(S)}\leq\epsilon||\eta^2
v^{\beta}||_{L^{\frac{n+2}{n}}(S)}+\epsilon^{-\nu}||\eta^2
v^{\beta}||_{L^1(S)},
\]
where $\nu=\frac{n+2}{2q-(n+2)}$. Then plugging this inequality into
\eqref{superc} yields
\begin{equation*}
\begin{aligned}
&[1-c_n\Lambda(\beta)C_0C_1\epsilon]||
\eta^2v^{\beta}||_{L^{\frac{n+2}{n}}(S)}\\
&\leq c_nC_1\Lambda(\beta)\left[(C_0\epsilon^{-\nu}+C)||\eta^2
v^{\beta}||_{L^1(S)}+\left|\left|v^{\beta}\left(|\nabla\eta|^2+2\eta
\left(\frac{\partial}{\partial
t}-\Delta\right)\eta\right)\right|\right|_{L^1(S)}\right].
\end{aligned}
\end{equation*}
If we further choose $\epsilon=\frac{1}{2\Lambda (\beta)c_nC_0C_1}$,
then
\begin{equation*}
\begin{aligned}
&||\eta^2v^{\beta}||_{L^{\frac{n+2}{n}}(S)}\\
&\leq 2c_nC_1\Lambda (\beta)\left[C_0\left(2\Lambda
(\beta)c_nC_0C_1\right)^{\nu}+C\right]||\eta^2 v^{\beta}||_{L^1(S)}\\
&\quad+2c_nC_1\Lambda
(\beta)\left|\left|v^{\beta}\left(|\nabla\eta|^2+
2\eta\left(\frac{\partial}{\partial t}-\Delta\right)\eta\right)\right|\right|_{L^1(S)}\\
&\leq
C_a(n,q,C_0,C_1,C)\Lambda(\beta)^{1+\nu}\left|\left|v^{\beta}\left(\eta^2+
|\nabla\eta|^2+2\eta\left(\frac{\partial}{\partial
t}-\Delta\right)\eta\right)\right|\right|_{L^1(S)},
\end{aligned}
\end{equation*}
where
\[
C_a(n,q,C_0,C_1,C)=(2c_nC_0C_1)^{1+\nu}+2c_nC_1(C+1).
\]
Therefore we obtain a soft reverse H\"{o}lder inequality
\begin{equation*}
\begin{aligned}
&||\eta^2v^{\beta}||_{L^{\frac{n+2}{n}}(S)}\\
&\le
C_a(n,q,C_0,C_1,C)\Lambda(\beta)^{1+\nu}\left|\left|v^{\beta}\left(\eta^2+
|\nabla\eta|^2+2\eta\left|\left(\frac{\partial}{\partial
t}-\Delta\right)\eta\right|\right)\right|\right|_{L^1(S)}
\end{aligned}
\end{equation*}
provided the function $(\eta
v^{\frac{\beta}{2}})^{\frac{2(n-1)}{n-2}}$ satisfies
conditions~\eqref{condition1} and~\eqref{condition2} for any
$t\in[0,T)$.
\end{proof}

Next, we shall show that an $L^{\infty}$-norm of $v$ over a smaller
set can be bounded by an $L^{\beta}$-norm of $v$ on a bigger set,
where $\beta\ge2$. Fix $x_0\in N^{n+1}$. Consider the following
space-time sets:
\[
D=\cup_{0\le t\le 1}(B(x_0,1)\cap M_t);\quad D^{'}=
\cup_{\frac{1}{12}\le t\leq 1}(B(x_0,\frac{1}{2^p})\cap M_t),
\]
where $p\geq1$ is large enough, depending on $n$, $\max_{x\in
M_0}|A|$ and $N$, to be described later. Then, we have the following
Harnack inequality.
\begin{lemma}\label{moser}
Consider the equation \eqref{keyeq} with $T=1$. Let us denote by $
\lambda=\frac{n+2}{n}$, let $q>\frac{n +2}{2}$ and $\beta\geq 2$.
Then, there exists a constant $C_b=C_b(n,q,\beta,C_0,C_1,C)$ such
that
\begin{equation}\label{firstM}
||v||_{L^{\infty}(D^{'})}\leq C_b||v||_{L^{\beta}(D)}.
\end{equation}
In the above inequalities, $C_0$ and $C_1$ are defined by
(\ref{Czero}).
\end{lemma}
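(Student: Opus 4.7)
The plan is to run a classical Moser iteration driven by the reverse H\"older inequality of Lemma \ref{sorevho}, telescoping across a sequence of shrinking space-time cylinders interpolating between $D$ and $D'$.

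First I would pick nested cylinders $D_k = \bigcup_{\tau_k \le t \le 1}(B(x_0, r_k) \cap M_t)$ with radii $r_k$ decreasing from $1$ to $2^{-p}$ and initial times $\tau_k$ increasing from $0$ to $1/12$, both at geometric rate $2^{-k}$, so that $D_0 = D$ and $\bigcap_k D_k = D'$. For each $k$ I would choose a product-type cutoff $\eta_k(x, t) = \phi_k(d_N(x, x_0))\,\psi_k(t)$ equal to $1$ on $D_{k+1}$ and to $0$ outside $D_k$, with $|\phi_k'|, |\psi_k'| \le C\cdot 2^k$ and $|\phi_k''| \le C\cdot 4^k$. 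Using $|\nabla_{M_t} d_N|\le 1$, the flow identities $\partial_t\, d_N(F(x,t), x_0) = -H\langle\bar\nabla d_N, \nu\rangle$ and $\Delta_{M_t} d_N = \mathrm{tr}_{M_t}(\mathrm{Hess}_N d_N) + H\langle \bar\nabla d_N, \nu\rangle$, together with the bounded geometry of $N$, one obtains
\[
|\nabla \eta_k|^2 + 2\eta_k\bigl|(\partial_t - \Delta)\eta_k\bigr| \le C\cdot 4^k\bigl(1 + |H|^2\bigr)
\]
pointwise on $D_k \setminus D_{k+1}$.

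Next I would set $\beta_k = \beta\lambda^k$ and apply Lemma \ref{sorevho} at the $k$-th stage with the pair $(\eta_k, \beta_k)$. The Hoffman--Spruck volume conditions \eqref{condition1} and \eqref{condition2} must hold for $(\eta_k v^{\beta_k/2})^{2(n-1)/(n-2)}$ at every $t$, and this is exactly where the parameter $p$ enters: choosing $p$ large in terms of $n$, $\max_{M_0}|A|$ and the geometry of $N$ keeps $\mathrm{Vol}(M_t\cap B(x_0, r_k))$ small enough, uniformly in $t$ and $k$, for \eqref{condition1} to hold, while also enforcing $2\rho_0 \le \bar R(M_t)$. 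Granting this, the lemma produces
\[
\|\eta_k^2 v^{\beta_k}\|_{L^\lambda(S)} \le C_a\,\Lambda(\beta_k)^{1+\nu}\,\bigl\| v^{\beta_k}\bigl(\eta_k^2 + |\nabla\eta_k|^2 + 2\eta_k|(\partial_t - \Delta)\eta_k|\bigr)\bigr\|_{L^1(S)},
\]
and the stray $|H|^2$ on the right is absorbed via H\"older against the $L^{n+3}(M\times[0,T))$-bound on $H$ packaged in $C_1$. Extracting $\beta_k$-th roots yields the one-step inequality
\[
\|v\|_{L^{\beta_{k+1}}(D_{k+1})} \le \bigl(C\cdot\Lambda(\beta_k)^{1+\nu}\cdot 4^k\bigr)^{1/\beta_k}\,\|v\|_{L^{\beta_k}(D_k)}.
\]

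Finally I would iterate and let $k\to\infty$. Since $\beta_k$ grows geometrically whereas $\Lambda(\beta_k)^{1+\nu}\cdot 4^k$ grows at most polynomial-times-exponential in $k$, the product $\prod_{k\ge 0}\bigl(C\,\Lambda(\beta_k)^{1+\nu}\,4^k\bigr)^{1/\beta_k}$ converges to a finite constant $C_b = C_b(n, q, \beta, C_0, C_1, C)$, and passing to the limit yields exactly \eqref{firstM}. The main obstacle I anticipate is the uniform verification of the volume condition \eqref{condition1} along the entire flow: since $M_t$ deforms with $t$, this demands a short-time local area bound for $M_t \cap B(x_0, r)$ derived from $\max_{M_0}|A|$ and bounded-geometry comparison in $N$, and it is precisely this bound that determines how large $p$ must be chosen.
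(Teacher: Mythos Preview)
Your overall architecture---nested cylinders, Ecker-type product cutoffs, iterate Lemma~\ref{sorevho} with exponents $\beta_k=\beta\lambda^k$, and sum the resulting geometric series---is exactly the paper's, so the proposal is on the right track.

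The one substantive slip is the formula for the intrinsic Laplacian of an ambient function. Under the mean curvature flow $\partial_t F=-H\nu$ (with the paper's convention, so that $\vec H=-H\nu$ is the mean curvature vector), one has for any smooth $u:N\to\mathbb R$
\[
\Delta_{M_t}(u\circ F)\;=\;\mathrm{tr}_{M_t}\bigl(\mathrm{Hess}_N u\bigr)+du(\vec H)
\;=\;\mathrm{tr}_{M_t}\bigl(\mathrm{Hess}_N u\bigr)-H\langle\bar\nabla u,\nu\rangle,
\]
with a \emph{minus} sign in front of $H$, not the plus you wrote. Since $\partial_t(u\circ F)=du(\vec H)=-H\langle\bar\nabla u,\nu\rangle$ as well, the two $H$-terms cancel and
\[
\Bigl(\tfrac{\partial}{\partial t}-\Delta_{M_t}\Bigr)(u\circ F)\;=\;-\,\mathrm{tr}_{M_t}\bigl(\mathrm{Hess}_N u\bigr),
\]
with \emph{no} residual dependence on $H$. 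This is precisely Ecker's observation. Taking $u=d_N(\,\cdot\,,x_0)^2$ (the paper writes $|x-x_0|^2$, to be read as $d_N^2$ in the Riemannian setting), the bounded geometry of $N$ gives $|\mathrm{tr}_{M_t}\mathrm{Hess}_N(d_N^2)|\le c(N)$ uniformly on the ball, and one obtains the clean pointwise bound
\[
\eta_k^2+|\nabla\eta_k|^2+2\eta_k\Bigl|\bigl(\tfrac{\partial}{\partial t}-\Delta\bigr)\eta_k\Bigr|\;\le\;\frac{c_n}{\rho_k^2}\;=\;c_n\,4^{p+k+1},
\]
exactly as in \eqref{etaDk}. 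Your phantom factor $(1+|H|^2)$ never appears, and the ``absorb via H\"older'' step is unnecessary. This matters, because the absorption you sketch is not innocuous: the stray $|H|^2$ would come multiplied by $4^k$, so interpolating $\|v^{\beta_k}|H|^2\|_{L^1}$ between $L^1$ and $L^\lambda$ and trying to absorb into the left-hand side would fail for large $k$ unless you rework Lemma~\ref{sorevho} with a $k$-dependent potential and track the resulting growth in $C_a$. That can be done, but it is not the one-liner you indicate---and, more to the point, it is not needed once the sign is corrected.

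For the Hoffman--Spruck volume hypothesis the paper's device is somewhat simpler than the short-time local area estimate you anticipate: since $\partial_t d\mu=-H^2 d\mu\le 0$, the $g(t)$-area of any fixed subset of $M$ is dominated by its $g(0)$-area, and the latter is controlled by Bishop--Gromov (the sectional curvature of $M_0$ is bounded below in terms of $\max_{M_0}|A|$ and the curvature of $N$ via Gauss' equation). Choosing $p$ large enough then makes conditions \eqref{condition1}--\eqref{condition2} hold for all $t$.
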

\begin{remark}
In fact, in Lemma \ref{moser} we can choose
\begin{equation}\label{eq-Cb}
C_{b}(n,p,q,\beta,C_0,C_1,C)=(4^p\lambda^{1+\nu}
C_z\beta^{1+\nu})^{\frac{n^2}{\beta}},
\end{equation}
where
\begin{equation}\label{eq-Cb2}
C_z(n,q,C_0,C_1,C):=4^2\times 100^{1+\nu}c_nC_a(n,q,C_0,C_1,C).
\end{equation}
\end{remark}

\begin{proof}[Proof of Lemma~\ref{moser}]
The proof of this lemma, using Lemma \ref{sorevho} and Moser
iteration, is similar to that of Lemma 5.2 in \cite{Le-Sesum}. If we
set
\[
D_k=\cup_{t_k\le t\le 1}(B(x_0,r_k)\cap M_t),
\]
where
\[
r_k=\frac{1}{2^p}+\frac{1}{2^{p+k+1}}; \quad t_k=
\frac{1}{12}\left(1-\frac{1}{4^{p+k}}\right),
\]
then
\[
\rho_k:=r_{k-1}-r_k=\frac{1}{2^{p+k+1}}; \quad t_k-t_{k-1}=\rho^2_k.
\]
Following Ecker \cite{Ecker95}, we choose a test function $\eta_k
=\eta_k(t,x)$ of the form
\begin{equation}\label{testk}
\eta_k(t,x)=\varphi_{\rho_k}(t)\times \psi_{\rho_k}(|x-x_0|^2),
\end{equation}
where the function $\varphi_{\rho_k}$ satisfies
\[
 \varphi_{\rho_k}(t)=
\left\{
 \begin{alignedat}{1}
1 \quad &\text{if}~ t_{k}\leq t\leq 1, \\\
\in [0, 1] \quad &\text{if}~ t_{k-1}\leq t\leq t_k, \\\
0 \quad &\text{if}~ t\leq t_{k-1}.
 \end{alignedat}
 \right.
 \]
and
\[
 \abs{\varphi^{'}_{\rho_k}}(t)\leq\frac{c_n}{\rho^2_k};
\]
whereas the function $\psi_{\rho_k}(s)$ satisfies
\[
 \psi_{\rho_k}(s)=
\left\{
 \begin{alignedat}{1}
0 \quad &\text{if}~  s\geq r_{k-1}^2, \\\
\in [0, 1]  \quad &\text{if}~ r^2_k\leq s\leq r^2_{k-1}, \\\
1 \quad &\text{if}~ s\leq r^2_k.
 \end{alignedat}
 \right.
\]
and
\[
 \abs{\psi^{'}_{\rho_k}}(s)\leq \frac{c_n}{\rho^2_k}.
\]
By the definition, we can easily check that
\[
0\leq \eta_k\leq 1;\quad\eta_k\equiv
1~\text{in}~D_k;\quad\eta_k\equiv 0~\text{outside}~D_{k-1}
\]
and
\begin{equation}\label{etaDk}
\sup_{t\in [0,1]}\sup_{x\in M_t}\left(\eta^2_k+
|\nabla\eta_k|^2+2\eta_k\left|\left(\frac{\partial}{\partial
t}-\Delta\right)\eta_k\right|\right)\leq\frac{c_n}{\rho^2_k}=c_n4^{p+k+1},
\end{equation}
where $\eta_k=\eta_k(t,x)$. Here we claim that if $\beta\geq 2$, let
$\Lambda(\beta) =100\beta$. Now using Lemma \ref{sorevho}, we have
\begin{equation}
\begin{aligned}\label{etaDkc}
||v^{\beta}||_{L^{\frac{n+2}{n}}(D_k)}
&\leq||\eta_k^2v^{\beta}||_{{L^{\frac{n+2}{n}}}(M\times[0,T))}\\
&\leq C_a\Lambda(\beta)^{1+
\nu}\int_0^T\int_{M_t}v^{\beta}\left(\eta_k^2+|\nabla\eta_k|^2+
2\eta_k\left|\left(\frac{\partial}{\partial t}-\Delta\right)\eta_k\right|\right)d\mu dt\\
&\leq c_n4^{p+k+1}C_a\Lambda(\beta)^{1+\nu}\int_{D_{k-1}}v^{\beta}d\mu dt\\
&:=C_z(n,q,C_0,C_1,C) 4^{p+k-1}\beta^{1+
\nu}||v^{\beta}||_{L^1(D_{k-1})},
\end{aligned}
\end{equation}
as long as $p$ is sufficient large so that the function $(\eta_k
v^{\frac{\beta}{2}})^{\frac{2(n-1)}{n-2}}$ naturally satisfies
conditions~\eqref{condition1} and~\eqref{condition2} for any
$t\in[0,T)$, where the third line results from the inequality
\eqref{etaDk} and the definition of $\rho_k$. Note that we can
choose
\[
\Lambda(\beta)=100\beta,\quad C_z(n,q,C_0,C_1,C)=4^2\times
100^{1+\nu}c_nC_a.
\]

Now we explain why $(\eta_k
v^{\frac{\beta}{2}})^{\frac{2(n-1)}{n-2}}$ naturally satisfies
conditions~\eqref{condition1} and~\eqref{condition2} for any
$t\in[0,T)$ when $p$ is large enough. In fact under mean curvature
flow, we observe that
\[
Vol_{g(t)}(B(R))\leq Vol_{g(0)}(B(R))
\]
for any $t\in[0,T)$ by \eqref{volde}.  For $g(0)$, there is a
non-positive constant $K$ depending on $n$, $\max_{x\in M_0}|A|$ and
$N$ such that the sectional curvature of $M_0$ is bounded from below
by $K$. Then the Bishop-Gromov volume comparison theorem implies
\[
Vol_{g(0)}(B(R))\leq Vol_K(B(R)),
\]
where $Vol_K(B(R))$ denotes the volume of the ball with radius $R$
in the $n$-dimensional complete simply connected space form with
constant curvature $K$. Hence
\[
Vol_{g(t)}(B(R))\leq Vol_K(B(R)).
\]
Therefore, we can choose $R$ sufficient small such that
\[
b^2(1-\alpha)^{-2/n}\left(\omega_n^{-1}Vol_K(B(R))\right)^{2/n}\leq
1
\]
and
\[
2\rho_0\leq\bar{R}(M),
\]
where $\rho_0$ is defined by \eqref{canshu1}. Here the sufficient
small $R$ can be achieved by choosing a sufficient large $p$. Hence
this explanation shows that the function $(\eta_k
v^{\frac{\beta}{2}})^{\frac{2(n-1)}{n-2}}$ naturally satisfies
conditions~\eqref{condition1} and~\eqref{condition2} for any
$t\in[0,T)$ as long as $p$ is large enough.

For simplicity, let us denote  by $C_z=C_z(n,q,C_0,C_1,C)$. Then
\eqref{etaDkc} can be simplified as follows:
\begin{equation}\label{firstRH}
||v||_{L^{\frac{n+2}{n}\beta}(D_k)}\leq
C^{\frac{1}{\beta}}_z4^{\frac{p+k-1}{\beta}}\beta^{\frac{1+
\nu}{\beta}}||v||_{L^{\beta}(D_{k-1})}.
\end{equation}
This inequality is the key estimate for our Moser iteration process.
If we set $\lambda =\frac{n+2}{n}$ and replace $\beta$ by
$\lambda^{k-1}\beta$ in \eqref{firstRH}, then
\begin{equation*}
\begin{aligned}
||v||_{L^{\beta \lambda^k}(D_{k})}&\leq
C^{\frac{1}{\beta\lambda^{k-1}}}_z4^{\frac{p+k-1}{\beta\lambda^{k-1}}}
(\lambda^{k-1}\beta)^{\frac{1+\nu}{\beta\lambda^{k-1}}}
||v||_{L^{\beta\lambda^{k-1}}(D_{k-1})}\\
&\leq
C^{\frac{1}{\beta\lambda^{k-1}}}_z(4^p)^{\frac{k-1}{\beta\lambda^{k-1}}}
(\lambda^{k-1}\beta)^{\frac{1+\nu}{\beta\lambda^{k-1}}}
||v||_{L^{\beta\lambda^{k-1}}(D_{k-1})}.
\end{aligned}
\end{equation*}
It follows by iteration that for all $k_0\geq 0$
\begin{equation}
\begin{aligned}\label{secondRH}
||v||_{L^{\beta\lambda^k}(D_k)} &\leq
\left(C^{\frac{1}{\beta}}_z\beta^{\frac{1+\nu}{\beta}}\right)^{\sum^{k-1}_{j=k_0}
\frac{1}{\lambda^j}}\left(4^{\frac{p}{\beta}}\cdot
\lambda^{\frac{1+\nu}{\beta}}\right)^{\sum^{k-1}_{j=k_0}
\frac{j}{\lambda^j}}||v||_{L^{\beta\lambda^{k_0}}(D_{k_0})} \\
&\leq C_b(n,p,q,\beta, C_0,
C_1,C)||v||_{L^{\beta\lambda^{k_0}}(D_{k_0})},
\end{aligned}
\end{equation}
where we can choose
\[
C_b(n,p,q,\beta,C_0,C_1,C)=(4^p\lambda^{1+\nu}
C_z\beta^{1+\nu})^{\frac{n^2}{\beta}},
\]
since
\[
\sum^{\infty}_{j=0}\frac{j}{\lambda^j}=\frac{\lambda}{(\lambda
-1)^2}=\frac{n(n+2)}{4}\leq n^2.
\]
Note that $D_0\subseteq D$ and $D^{'}\subset D_{k}$ for all positive
integer $k$. So inequality \eqref{secondRH} yields
\begin{equation*}
\begin{aligned}
||v||_{L^{\beta\lambda^k}(D^{'})}&\leq C_b(n,p,q,\beta,C_0,C_1,C)
||v||_{L^{\beta}(D_0)}\\
&\leq C_b(n,p,q,\beta,C_0,C_1,C) ||v||_{L^{\beta}(D)}.
\end{aligned}
\end{equation*}
Letting $k\to \infty$, we get
\[
||v||_{L^{\infty}(D^{'})}\leq C_b(n,p,q,\beta,C_0,C_1,C)
||v||_{L^{\beta}(D)}.
\]
\end{proof}

\section{Bounding the second fundamental form}\label{sec5}
In this section, we will prove that the second fundamental form
$A(\cdot,t)$ can be bounded by its certain integral sense. First we
establish a rescaled version.
\begin{proposition}\label{smallcont}
Let $F_t:M^n\rightarrow N^{n+1}$ be a solution to the mean curvature
flow \eqref{MCF1} of compact hypersurfaces of a locally symmetric
Riemannian manifold $N^{n+1}$ with bounded geometry. If there exists
a universal constant $c_0$ such that
\begin{equation}
 \int_0^1\int_{M_t}|A|^{n+3}d\mu dt\leq c_0,
\end{equation}
 then
\begin{equation}
 \sup_{\frac12\leq t\leq 1}\sup_{x\in M_t}|A(x,t)|\leq 1.
\end{equation}
\end{proposition}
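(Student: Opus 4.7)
The plan is to apply the Harnack-type inequality in Lemma \ref{moser} to the non-negative function $v := |A|^2$. This reduces the proposition to (i) producing a parabolic differential inequality of the form $(\partial_t - \Delta) v \leq (f + C) v$ to which the lemma applies, and (ii) verifying that the constants $C_0$, $C_1$ in \eqref{Czero} and the $L^\beta$-norm of $v$ on the cylinder $D$ are all controlled by the hypothesis $\int_0^1\!\!\int_{M_t} |A|^{n+3}\,d\mu\,dt \leq c_0$.

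\textbf{Step 1 (differential inequality).} Starting from the evolution equation for $|A|^2$ recalled in Section \ref{sec2}, I use both hypotheses on the ambient space $N$. Since $N$ is locally symmetric, $\bar{\nabla}\bar{R}m \equiv 0$, so the bottom line $-2h^{ij}(\bar{\nabla}_j \bar{R}_{0li}{}^l + \bar{\nabla}_l \bar{R}_{0ij}{}^l)$ vanishes identically. Since $N$ has bounded geometry, $|\bar{R}m|$ is uniformly bounded by some $\bar{C}(N)$, so each of $|A|^2 \bar{R}ic(\nu,\nu)$, $h^{ij}h_j^m \bar{R}_{mli}{}^l$ and $h^{ij}h^{lm}\bar{R}_{milj}$ is estimated by $C|A|^2$ for a constant $C = C(n,N)$. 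Discarding the favorable term $-2|\nabla A|^2$ then yields
\[
\left(\frac{\partial}{\partial t} - \Delta\right) v \leq \bigl(2|A|^2 + C\bigr) v = (f + C) v, \qquad f := 2|A|^2,
\]
with $v \geq 0$, which is precisely the template of \eqref{keyeq}.

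\textbf{Step 2 (bounding $C_0$ and $C_1$).} Pick $q := (n+3)/2 > (n+2)/2$ and $\beta := (n+3)/2 \geq 2$ (valid for $n \geq 2$). The hypothesis directly bounds
\[
\|f\|_{L^q(M\times [0,1))} = 2\left(\int_0^1\!\!\int_{M_t} |A|^{n+3}\,d\mu\,dt\right)^{1/q} \leq 2\, c_0^{2/(n+3)},
\]
and $|H|\leq \sqrt{n}\,|A|$ gives $\|H\|_{L^{n+3}(M\times[0,1))} \leq \sqrt{n}\,c_0^{1/(n+3)}$. Hence both $C_0$ and $C_1$ in \eqref{Czero} are controlled in terms of $c_0$, $n$, and $N$.

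\textbf{Step 3 (pointwise bound via Lemma \ref{moser}).} Fix arbitrary $t_0 \in [1/2,1]$ and $x_0 \in M_{t_0}$, and apply Lemma \ref{moser} centered at $x_0$ with $T = 1$ and the chosen $\beta$. Since $1/12 \leq t_0 \leq 1$ and $x_0 \in B(x_0, 2^{-p}) \cap M_{t_0}$, the point $(x_0, t_0)$ lies in $D'$, so
\[
|A(x_0,t_0)|^2 \leq \|v\|_{L^\infty(D')} \leq C_b\, \|v\|_{L^\beta(D)} \leq C_b\left(\int_0^1\!\!\int_{M_t}|A|^{n+3}\,d\mu\,dt\right)^{1/\beta} \leq C_b\, c_0^{2/(n+3)}.
\]
Choosing the universal constant $c_0$ small enough (in terms of $n$, $N$, and the fixed $M_0$ through $p$ and $C_b$) so that $C_b\, c_0^{2/(n+3)} \leq 1$, and then taking the supremum over $(x_0, t_0)$, gives the desired conclusion.

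The main obstacle I foresee is Step 1: one must rely on local symmetry of $N$ to kill the covariant-derivative-of-curvature term exactly, since without that cancellation bounded geometry alone would only yield an extra contribution of order $C|A|$ (not of the form $(f+C)v$ with $f \in L^q$), which does not fit the template of Lemma \ref{moser} and would force a qualitatively different iteration scheme.
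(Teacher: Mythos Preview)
Your proposal is correct and follows essentially the same route as the paper: you derive the same parabolic inequality $(\partial_t-\Delta)v\le (2|A|^2+C)v$ using local symmetry to kill the $\bar\nabla\bar R m$ terms and bounded geometry to control the rest, then apply Lemma~\ref{moser} with the same choices $q=\beta=(n+3)/2$, bound $C_0,C_1$ and $\|v\|_{L^\beta(D)}$ by the hypothesis exactly as in \eqref{budengshi1}--\eqref{budengshi2}, and conclude by taking $c_0$ small. Your explicit remark that one must vary the center $x_0$ to cover all of $M_t$ is a point the paper leaves implicit.
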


\begin{proof}[Proof of Proposition~\ref{smallcont}]
We follow the lines of the proof of Lemma 4.1 in~\cite{Le}. In
Section~\ref{sec2}, we have the following evolution equation:
\begin{equation*}
\begin{aligned}
\left(\frac{\partial}{\partial t}-\Delta\right)|A|^2&=-2|\nabla A|^2
+2|A|^2\left(|A|^2+\bar{R}ic(\nu,\nu)\right)\\
&\,\,\,\,\,\,-4\left(h^{ij}h^m_j\bar{R}_{mli}\
^l-h^{ij}h^{lm}\bar{R}_{milj}\right)\\
&\,\,\,\,\,\,-2h^{ij}(\bar{\nabla}_j\bar{R}_{0li}\
^l+\bar{\nabla}_l\bar{R}_{0ij}\ ^l).
\end{aligned}
\end{equation*}
Since $N$ is locally symmetric by assumption, we have
$\bar{\nabla}\bar{R}m= 0$. So the above equation becomes
\begin{equation}
\begin{aligned}\label{eq-ev-rim}
\left(\frac{\partial}{\partial t}-\Delta\right)|A|^2&=-2|\nabla A|^2
+2|A|^2\left(|A|^2+\bar{R}ic(\nu,\nu)\right)\\
&\,\,\,\,\,\,-4\left(h^{ij}h^m_j\bar{R}_{mli}\
^l-h^{ij}h^{lm}\bar{R}_{milj}\right)
\end{aligned}
\end{equation}
Furthermore, using the bounds on the geometry of $N$  we have
\[
\left(\frac{\partial}{\partial
t}-\Delta\right)|A|^2\le2|A|^4+C|A|^2,
\]
where $C$ is a positive constant, depending on $N$. If we set
$v=|A|^2$ and $f=2|A|^2$, then
\[
\left(\frac{\partial}{\partial t}-\Delta\right)v\le (f+C)v.
\]

Now we will use Lemma~\ref{moser} to the above equation. Here we let
$q=\frac{n+3}{2}$ and $\beta=\frac{n+3}{2}$. \eqref{nueq} and
\eqref{Ca} imply
\[
C_a=(2c_nC_0C_1)^{n+3}+2c_nC_1(C+1).
\]
Meanwhile from \eqref{eq-Cb} and \eqref{eq-Cb2}, we have
\[
C_b=c_n(p)C_z^{\frac{2n^2}{n+3}}=c_n(p)C_a^{\frac{2n^2}{n+3}}
=c_n(p)[(C_0C_1)^{n+3}+C_1(C+1)]^{\frac{2n^2}{n+3}},
\]
where $c_n(p)$ only depends on $n$ and $p$. We also know that
\begin{equation}\label{budengshi1}
\begin{aligned}
C_0&=2\left[\int_0^1\int_{M_t}|A|^{2q}d\mu
dt\right]^{1/q}\\
&\leq 2c_0^{\frac{2}{n+3}}
\end{aligned}
\end{equation}
and
\begin{equation}
\begin{aligned}\label{budengshi2}
C_1&=\left[1+\left(\int_0^1\int_{M_t}|H|^{n+3}d\mu dt\right)
^{\frac 23}\right]^{\frac{n}{n+2}}\\
&\leq \left[1+\left(\int_0^1\int_{M_t}n^{\frac
{n+3}{2}}|A|^{n+3}d\mu dt\right)
^{\frac 23}\right]^{\frac{n}{n+2}}\\
&\leq \left(1+n^{\frac{n+3}{3}}c_0^{\frac
23}\right)^{\frac{n}{n+2}},
\end{aligned}
\end{equation}
where we used inequality $|H|^2\leq n|A|^2$. Hence by Lemma
\ref{moser}, and using \eqref{budengshi1} and \eqref{budengshi2}, we
have
\begin{equation}
\begin{aligned}
||v||_{L^{\infty}(D^{'})}&\leq
C_b||v||_{L^{\beta}(D)}\\
&=c_n(p)[(C_0C_1)^{n+3}+C_1(C+1)]^{\frac{2n^2}{n+3}}||v||_{L^{\beta}(D)}\\
&\leq
c_n(p)[(C_0C_1)^{n+3}+C_1(C+1)]^{\frac{2n^2}{n+3}}c_0^{\frac{2}{n+3}}\\
&\leq 1
\end{aligned}
\end{equation}
if $c_0$ is sufficient small. Note that $c_n(p)$ does not depend on
$c_0$.
\end{proof}

Using Proposition \ref{smallcont}, and following the proof of
Proposition 1.1 in \cite{Le}, we have the following key result for
proving Theorem \ref{wuN2}.
\begin{proposition}\label{smallcont2}
Let $F_t:M^n\rightarrow N^{n+1}$ be a solution to the mean curvature
flow \eqref{MCF1} of compact hypersurfaces of a locally symmetric
Riemannian manifold $N^{n+1}$ with bounded geometry. For all
$\lambda\in(0,1]$, there exists a constant $c_{\lambda}$ such that
for all $T\geq \lambda$, we have
\[
\sup_{x\in M_T}|A(x,T)|\leq c_{\lambda}
\left(1+\int^T_0\int_{M_t}|A|^{n+3}d\mu dt\right).
\]
\end{proposition}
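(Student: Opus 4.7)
The plan is to reduce to the \(\varepsilon\)-regularity statement of Proposition~\ref{smallcont} by a parabolic rescaling, following the argument used by N.~Le in the Euclidean setting. Fix \(T\geq\lambda\), a point \(x_0\in M_T\), and write \(I:=\int_0^T\!\int_{M_t}|A|^{n+3}\,d\mu\,dt\); the aim is to bound \(|A|(x_0,T)\) by \(c_\lambda(1+I)\).

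I would set
\[
\mu \;:=\; \max\!\bigl(\lambda^{-1/2},\, I/c_0\bigr),
\]
with \(c_0\) the universal constant from Proposition~\ref{smallcont}, and consider the time-translated flow
\[
\tilde F(x,\tau)\;:=\;F\bigl(x,\,T-\mu^{-2}(1-\tau)\bigr),\qquad \tau\in[0,1],
\]
viewed as a mean curvature flow in the rescaled ambient \(\tilde N:=(N,\mu^{2}g_N)\). Because \(\mu\geq\lambda^{-1/2}\) and \(T\geq\lambda\), the initial time \(T-\mu^{-2}\) lies in \([0,T]\), so the rescaled flow is well-defined on the full interval \([0,1]\); and because \(\mu\geq 1\), the rescaling \(g_N\mapsto\mu^{2}g_N\) only decreases the sectional curvature and first covariant derivatives of curvature of \(\tilde N\) and enlarges its injectivity radius, while obviously preserving local symmetry. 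Hence \(\tilde N\) satisfies the same bounded-geometry and local-symmetry hypotheses as \(N\), with constants no worse.

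Under this rescaling one has \(|\tilde A|=\mu^{-1}|A|\) and \(d\tilde\mu\,d\tau=\mu^{n+2}\,d\mu\,dt\), so
\[
\int_0^1\!\int_{\tilde M_\tau}\!|\tilde A|^{n+3}\,d\tilde\mu\,d\tau
\;=\;\mu^{-1}\!\int_{T-\mu^{-2}}^{T}\!\int_{M_t}\!|A|^{n+3}\,d\mu\,dt
\;\leq\;\mu^{-1}I\;\leq\;c_0
\]
by the choice of \(\mu\). Applying Proposition~\ref{smallcont} to \(\tilde F\) then yields \(\sup_{\tau\in[1/2,1]}\sup_{\tilde M_\tau}|\tilde A|\leq 1\); since \(\tau=1\) corresponds to \(t=T\) and \(|\tilde A|(x_0,1)=\mu^{-1}|A|(x_0,T)\), I obtain \(|A|(x_0,T)\leq\mu\), and therefore
\[
|A|(x_0,T)\;\leq\;\max\!\bigl(\lambda^{-1/2},\,I/c_0\bigr)\;\leq\;c_\lambda\bigl(1+I\bigr),
\]
with \(c_\lambda:=\max\!\bigl(\lambda^{-1/2},\,1/c_0\bigr)\), as required.

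The one technical point that needs checking is that the constant \(c_0\) of Proposition~\ref{smallcont} can be chosen uniformly across the whole family of rescaled flows. Tracing back, \(c_0\) is controlled by the constant \(C_b\) of Lemma~\ref{moser}, which in turn depends on \(n\), on the curvature and injectivity bounds of the ambient, and on the integer \(p\) chosen large enough to make the Hoffman--Spruck conditions \eqref{condition1}--\eqref{condition2} hold via Bishop--Gromov. All three ingredients are only improved when passing from \((N,g_N)\) to \((N,\mu^{2}g_N)\) with \(\mu\geq 1\), and enlarging \(p\) is always admissible; consequently \(p\) may be fixed once for the original data, producing a single \(c_0\) (hence a single \(c_\lambda\)) that works for every \(T\geq\lambda\) and every \(x_0\in M_T\). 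This is the main obstacle to address; once it is in place, the parabolic-rescaling step carries through cleanly.
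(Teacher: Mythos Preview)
Your proposal is correct and follows essentially the same route the paper takes: the paper simply defers to Le's argument (Proposition~1.1 in \cite{Le}), and Le's argument is precisely the parabolic rescaling plus $\varepsilon$-regularity step you have written out. Your explicit treatment of why the constant $c_0$ in Proposition~\ref{smallcont} survives the rescaling is a point the paper leaves entirely implicit; your observation that passing to $(N,\mu^2 g_N)$ with $\mu\ge 1$ only weakens the Hoffman--Spruck constraints is the right one, and the Bishop--Gromov input can indeed be anchored at the original $M_0$ (via the monotonicity $\mathrm{Vol}_{g(t)}\le\mathrm{Vol}_{g(0)}$ under the flow together with the scaling relation $\mu^n\mathrm{Vol}_K(B(\mu^{-1}R))\le \mathrm{Vol}_K(B(R))$ for $K\le 0$), so a single $p$ fixed from the time-zero data works for every rescaled flow.
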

\begin{proof}[Proof of Theorem~\ref{smallcont2}]
The proof is the same as that of Proposition 1.1 in \cite{Le}.
\end{proof}

\section{Proof of Theorem \ref{wuN2}}\label{sec7}
In this section, we will follow the Le's method in \cite{Le} and
prove Theorem \ref{wuN2} in introduction. The proof is the same as
the Le's proof. For the completeness, we still give the details of
the proof here.
\begin{proof}[Proof of Theorem \ref{wuN2}]
Fix $\tau_1<T$ such that $0<\tau_1<1$. By Proposition
\ref{smallcont2}, for any $t\geq\tau_1$, there is a universal
constant $c$ depending on $\tau_1$, such that
\begin{equation}\label{tauineq}
\sup_{x\in M_t}|A(x,t)|\leq
c(\tau_1)\left(1+\int_0^t\int_{M_s}|A|^{n+3}d\mu ds\right).
\end{equation}
We set $f(t)=\sup_{x\in M_t}|A(x,t)|$, $\Psi(s)=s\log(a+s)$ and
\[
G(s)=\int_{M_s} \frac{|A|^{n+2}}{\log(a+|A|)}d\mu,
\]
where $a$ is a fixed constant and $a\geq 1$.

Note that $\Psi$ is an increasing function. Then \eqref{tauineq} can
be read as
\begin{eqnarray*}
 f(t)&\leq&c(\tau_1)\left(1+\int_0^t \int_{M_s}\Psi(|A|)
\frac{|A|^{n+2}}{\log(a+|A|)}d\mu ds\right) \\
&\leq& c(\tau_1)\left[1+\int_0^t\Psi\left(\sup_{x\in
M_s}|A(x,s)|\right)\int_{M_s}
\frac{|A|^{n+2}}{\log(a+|A|)}d\mu ds\right]\\
&=&c(\tau_1)\left(1+\int_0^t\Psi (f(s))G(s) ds\right).
\end{eqnarray*}
If we set
\[
 h(t)=c(\tau_1)\left(1+\int_0^t\Psi(f(s))G(s)ds\right),
\]
then for $t\geq \tau_1$
\[
f(t)\leq h(t)
\]
and
\[
 h^{'}(t)=c(\tau_1)\Psi(f(t))G(t)\leq c(\tau_1)\Psi(h(t)) G(t).
\]
If we further set
\[
\tilde{\Psi}(y)=\int_c^y \frac{1}{\Psi (s)} ds,
\]
then for $t\geq \tau_1$,
\[
\tilde{\Psi}(h(t))-\tilde{\Psi}(h(\tau_1))\leq
c(\tau_1)\int_{\tau_1}^tG(s)ds\leq c(\tau_1)\int_0^T G(s)ds<\infty.
\]
Note that $h(\tau_1)$ is finite. Therefore
\begin{equation}\label{zuihou}
\sup_{\tau_1\leq t<T}\tilde{\Psi}(h(t))\leq\tilde{\Psi}
(h(\tau_1))+c(\tau_1)\int_0^TG(s)ds<\infty.
\end{equation}
Since
$\int_c^{\infty}\frac{ds}{\Psi(s)}=\int_c^{\infty}\frac{ds}{s\log(a+s)}=\infty$,
by \eqref{zuihou} we deduce that
\[
\sup_{\tau_1\leq t<T} h(t)<\infty.
\]
Hence
\[
\sup_{\tau_1\leq t<T} f(t)<\infty.
\]
Namely, $\sup_{x\in M_t}|A(x,t)|<\infty$ for $0\leq t<T$. By Theorem
\ref{Coopound} in introduction, the mean curvature flow can be
extended past $T$.
\end{proof}

Just as Le's said in \cite{Le}, from the proof of Theorem
\ref{wuN2}, we have another subcritical quantities concerning the
second fundamental and obtain a similar extension result. For
example: We choose $\Psi(s)=s\log(a+s^b)$  and
\[
G(s)=\int_{M_s} \frac{|A|^{n+2}}{\log(a+|A|^b)}d\mu,
\]
where $b$ is a fixed constant and $b\geq 1$. Notice that
$\int_c^{\infty}\frac{ds}{s\log(a+s^b)}=\infty$. Following the proof
of Theorem \ref{wuN2}, we have
\begin{theorem}\label{wuspec}
Suppose $T<\infty$ is the first singularity time for a mean
curvature flow \eqref{MCF1} of compact hypersurfaces of a locally
symmetric Riemannian manifold $N^{n+1}$ with bounded geometry. If
\[
\int_0^T\int_{M_t}\frac{|A|^{n+2}}{\log(a+|A|^b)}d\mu dt<\infty,
\]
where $a$ and $b$ are fixed constants and $a, b\geq 1$, then this
flow can be extended past time $T$.
\end{theorem}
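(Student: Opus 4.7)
The plan is to follow verbatim the argument used to prove Theorem~\ref{wuN2}, replacing the auxiliary function $s\log(a+s)$ by $\Psi(s) = s\log(a+s^b)$ throughout. First I would fix $\tau_1$ with $0<\tau_1<\min(T,1)$ and invoke Proposition~\ref{smallcont2} to obtain, for all $t\geq \tau_1$,
\[
f(t) := \sup_{x\in M_t}|A(x,t)| \leq c(\tau_1)\Bigl(1+\int_0^t\int_{M_s}|A|^{n+3}\,d\mu\,ds\Bigr).
\]
The algebraic identity $|A|^{n+3} = \Psi(|A|)\cdot\dfrac{|A|^{n+2}}{\log(a+|A|^b)}$ then rewrites the integrand in a form matching the hypothesis. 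A short check shows that $\Psi$ is nondecreasing on $[0,\infty)$ for $a,b\geq 1$ (its derivative equals $\log(a+s^b)+\tfrac{bs^b}{a+s^b}\geq 0$), so pointwise we may estimate $\Psi(|A(x,s)|)\leq \Psi(f(s))$.

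With $G(s):=\int_{M_s}\dfrac{|A|^{n+2}}{\log(a+|A|^b)}\,d\mu$, this yields the Gronwall-type inequality
\[
f(t)\leq c(\tau_1)\Bigl(1+\int_0^t \Psi(f(s))\,G(s)\,ds\Bigr) =: h(t),\qquad t\geq\tau_1.
\]
Since $f\leq h$ and $\Psi$ is nondecreasing, one gets $h'(t)\leq c(\tau_1)\,\Psi(h(t))\,G(t)$. Introducing $\widetilde{\Psi}(y)=\int_c^y \frac{ds}{\Psi(s)}$ for a convenient constant $c>0$, this rearranges to $\tfrac{d}{dt}\widetilde{\Psi}(h(t))\leq c(\tau_1)\,G(t)$, which integrates to
\[
\sup_{\tau_1\leq t<T}\widetilde{\Psi}(h(t)) \leq \widetilde{\Psi}(h(\tau_1))+c(\tau_1)\int_0^T G(s)\,ds < \infty,
\]
using the hypothesis $\int_0^T G(s)\,ds<\infty$ and the finiteness of $h(\tau_1)$.

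The only point where the replacement $\log(a+s)\rightsquigarrow \log(a+s^b)$ enters substantively is the verification that $\widetilde{\Psi}(y)\to\infty$ as $y\to\infty$, which is equivalent to the divergence
\[
\int_c^{\infty}\frac{ds}{s\log(a+s^b)}=\infty.
\]
This holds because $\log(a+s^b)\sim b\log s$ for large $s$, so the integral behaves like $\tfrac{1}{b}\log\log s$ at infinity. Hence the uniform bound on $\widetilde{\Psi}(h(t))$ forces $\sup_{\tau_1\leq t<T}h(t)<\infty$, and therefore $\sup_{0\leq t<T}\sup_{x\in M_t}|A(x,t)|<\infty$. Invoking Theorem~\ref{Coopound} extends the flow past $T$. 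I do not anticipate any genuine obstacle: the divergence check is the only genuinely new ingredient, and it is elementary, so the proof is essentially a cosmetic modification of the argument given for Theorem~\ref{wuN2}.
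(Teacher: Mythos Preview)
Your proposal is correct and follows essentially the same approach as the paper: the paper simply states that one replaces $\Psi(s)=s\log(a+s)$ by $\Psi(s)=s\log(a+s^b)$ (and $G$ accordingly) in the proof of Theorem~\ref{wuN2}, observes that $\int_c^\infty \frac{ds}{s\log(a+s^b)}=\infty$, and concludes. Your write-up is in fact more detailed than the paper's, since you explicitly verify the monotonicity of $\Psi$ and sketch the divergence of the integral via the asymptotic $\log(a+s^b)\sim b\log s$.
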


\section{Proof of Corollary \ref{wuN1}}\label{sec8}
In this section, we shall apply Theorem \ref{wuN2} to give the proof
of Corollary \ref{wuN1} stated in the introduction.

\begin{proof}[Proof of Corollary \ref{wuN1}]

By H\"{o}lder's inequality, we know that $||A||_{L^\alpha(M\times
[0,T))}<\infty$, where $\alpha>n+2$, implies
$||A||_{L^{n+2}(M\times[0,T))} <\infty$. So we only need to prove
Corollary \ref{wuN1} for $\alpha=n+2$.

From Theorem \ref{wuN2}, we see that if $T<\infty$ is the first
singularity time for a mean curvature flow (\ref{MCF1}) of a compact
hypersurface of a locally symmetric Riemannian manifold $N^{n+1}$
with bounded geometry, and if
\[
\int_0^T\int_{M_t}\frac{|A|^{n+2}}{\log(a+|A|)}d\mu dt<\infty
\]
where $a$ is a fixed constant and $a\geq 1$, then the mean curvature
flow can be extended past $T$. Now if we choose $a=100$, then
\[
\frac{1}{\log(100+|A|)}\leq \frac 12.
\]
Namely,
\[
\int_0^T\int_{M_t}\frac{|A|^{n+2}}{\log(100+|A|)}d\mu dt\leq\frac
12\int_0^T\int_{M_t}|A|^{n+2}d\mu dt.
\]
Hence the assumption of Corollary \ref{wuN1} guarantees that the
mean curvature flow can be extended past $T$.
\end{proof}

\section*{Acknowledgment}
The author would like to thank Dr. E.-T. Zhao for his interest and
pointing out a mistake in proving Corollary \ref{wuN1} on an earlier
version of this manuscript.

\end{document}